\newtheorem{theorem}{Theorem}
\theoremstyle{plain}
\newtheorem{corollary}{Corollary}
\newtheorem{definition}{Definition}
\numberwithin{equation}{section}
\begin{document}
\title[Inclusion Theorems for Grand Lorentz Spaces]{Inclusion Theorems for
Grand Lorentz Spaces}
\author{Cihan UNAL}
\address{Sinop University Faculty of Arts and Sciences Department of
Mathematics \\
Sinop, TURKEY}
\email{cihanunal88@gmail.com}
\urladdr{}
\thanks{}
\author{Ismail AYDIN}
\address{Sinop University Faculty of Arts and Sciences Department of
Mathematics \\
Sinop, TURKEY}
\email{iaydin@sinop.edu.tr}
\urladdr{}
\date{}
\subjclass[2000]{Primary 46E30, 43A15, 46E35}
\keywords{Grand Lorentz spaces, Inclusion, Approximate identity}
\dedicatory{}
\thanks{}

\begin{abstract}
In this paper, we consider some inclusion theorems for grand Lorentz spaces $%
L^{p,q)}\left( X,\mu \right) $ and $\Lambda _{p),\omega }$ where $\mu $ is a
finite measure on $\left( X,\Sigma \right) .$ Moreover, we consider the
problem of the convergence of approximate identities in these spaces.
\end{abstract}

\maketitle

\section{Introduction}

Let $\left( X,\Sigma ,\mu \right) $ and $\left( X,\Sigma ,\nu \right) $ be
two finite measure spaces. It is known that $l^{p}\left( X\right) \subseteq
l^{q}\left( X\right) $ for $0<p\leq q\leq \infty .$ Subramanian \cite{Sub}
characterized all positive measures $\mu $ on $\left( X,\Sigma \right) $ for
which $L^{p}\left( \mu \right) \subseteq L^{q}\left( \mu \right) $ whenever $%
0<p\leq q\leq \infty $. Also, Romero \cite{Ro} investigated and developed
several results of \cite{Sub}. Moreover, Miamee \cite{Mi} obtained the more
general result as $L^{p}\left( \mu \right) \subseteq L^{q}\left( \nu \right) 
$ with respect to $\mu $ and $\nu .$ Aydin and Gurkanli \cite{Ayg} proved
some inclusion results for which $L^{p(.)}\left( \mu \right) \subseteq
L^{q(.)}\left( \nu \right) $. Moreover, these results was generalized by
Gurkanli\ \cite{Gu} and Kulak \cite{Ku} to the classical and variable
exponent Lorentz spaces. It is note that the classical Lorentz spaces are
introduced by Lorentz \cite{Lorentz1} and \cite{Lorentz2}.

In 1992, Iwaniec and Sbordone \cite{Iw} introduced grand Lebesgue spaces $%
L^{p)}\left( \Omega \right) $, $1<p<\infty $, on bounded sets $\Omega
\subset 
%TCIMACRO{\U{211d} }%
%BeginExpansion
\mathbb{R}
%EndExpansion
^{d}.$ Also, Greco et al. \cite{Gr} obtained a generalized version $%
L^{p),\theta }\left( \Omega \right) $. Recently, these spaces have
intensively studied for various applications including harmonic analysis,
interpolation-extrapolation theory, analysis of PDE's etc. For more details,
we can refer \cite{Fio2}, \cite{Goga1}, \cite{Goga2}. Gurkanli \cite{Gur}
studied the inclusion $L^{p),\theta }\left( \mu \right) \subseteq
L^{q),\theta }\left( \nu \right) $ under some conditions for two different
measures $\mu $ and $\nu $ on $\left( X,\Sigma \right) $, and proved that $%
L^{p),\theta }\left( \mu \right) $ has no an approximate identities. Jain
and Kumari \cite{Jain} generalized the classical Lorentz spaces to grand
ones. They gave some basic properties of the grand Lorentz spaces and
investigated the boundedness of maximal operator for these spaces.

In this paper, we consider some properties of grand Lorentz spaces $%
L^{p,q)}\left( \Omega ,\mu \right) $ and $\Lambda _{p),\omega }$. Also, we
give the relationship between these spaces in sense to \cite{Jain}.
Moreover, we investigate the inclusion theorems for $L^{p,q)}\left( \Omega
,\mu \right) $ and $\Lambda _{p),\omega }$ and obtain more general results
than \cite{Gu}. Finally, we will discuss the approximate identities of $%
\Lambda _{p),\omega }$ regarding the boundedness of the maximal operator.

\section{\textbf{Notations and Preliminaries}}

The space $L_{loc}^{1}\left( 
%TCIMACRO{\U{211d} }%
%BeginExpansion
\mathbb{R}
%EndExpansion
^{d}\right) $ is to be space of all measurable functions $f$ on $G$ such
that $f.\chi _{K}\in L^{1}\left( 
%TCIMACRO{\U{211d} }%
%BeginExpansion
\mathbb{R}
%EndExpansion
^{d}\right) $ for any compact subset $K\subset 
%TCIMACRO{\U{211d} }%
%BeginExpansion
\mathbb{R}
%EndExpansion
^{d}$ where $\chi _{K}$ is the characteristic function of $K$. A Banach
function space (shortly BF-space) on $%
%TCIMACRO{\U{211d} }%
%BeginExpansion
\mathbb{R}
%EndExpansion
^{d}$ is a Banach space $\left( B,\left\Vert .\right\Vert _{B}\right) $ of
measurable functions which is continuously embedded into $L_{loc}^{1}\left( 
%TCIMACRO{\U{211d} }%
%BeginExpansion
\mathbb{R}
%EndExpansion
^{d}\right) $, i.e. for any compact subset $K\subset 
%TCIMACRO{\U{211d} }%
%BeginExpansion
\mathbb{R}
%EndExpansion
^{d}$ there exists some constant $C_{K}>0$ such that $\left\Vert f.\chi
_{K}\right\Vert _{L^{1}}\leq C_{K}.\left\Vert f\right\Vert _{B}$ for all $%
f\in B$.

\begin{definition}
(see \cite{Bloz})Let $\left( X,\Sigma ,\mu \right) $ be a finite measure
space and $f$ be a measurable function on $X.$ The distribution function of $%
f$ is defined by%
\begin{equation*}
\lambda _{f}\left( y\right) =\mu \left\{ x\in X:f\left( x\right) >y\right\}
\end{equation*}%
for all $y>0.$ Moreover, the rearrangement of $f$ is defined by%
\begin{equation*}
f^{\ast }\left( t\right) =\inf \left\{ y>0:\lambda _{f}\left( y\right) \leq
t\right\} =\sup \left\{ y>0:\lambda _{f}\left( y\right) >t\right\} ,\text{ \
\ \ \ }t>0
\end{equation*}%
where $\inf \emptyset =+\infty .$ Also, the average function of $f$ is
defined by%
\begin{equation*}
f^{\ast \ast }\left( t\right) =\frac{1}{t}\tint\limits_{0}^{t}f^{\ast
}\left( s\right) ds,\text{ \ \ \ \ }t>0.
\end{equation*}%
It is clear that $\lambda _{f}\left( .\right) $, $f^{\ast }\left( .\right) $
and $f^{\ast \ast }\left( .\right) $ are non-increasing and right continuous
on $\left( 0,\infty \right) $. Moreover, it is known that $f^{\ast }\leq
f^{\ast \ast }.$
\end{definition}

\begin{definition}
(see \cite{Jain})A measurable and locally integrable function $\omega
:X\longrightarrow \left( 0,\infty \right) $ is called a weight function. We
say that $\omega _{1}\prec \omega _{2}$ if only if there exists $c>0$ such
that $\omega _{1}(t)\leq c\omega _{2}(t)$ for all $t\in X$. Two weight
functions are called equivalent and written $\omega _{1}\sim \omega _{2}$,
if $\omega _{1}\prec \omega _{2}$ and $\omega _{2}\prec \omega _{1}$. The
classical Lorentz space $\Lambda _{p,\omega }$ consists of all measurable
functions such that%
\begin{equation*}
\left\Vert f\right\Vert _{\Lambda _{p,\omega }}=\left(
\tint\limits_{0}^{1}\left( f^{\ast }\left( t\right) \right) ^{p}\omega
(t)dt\right) ^{\frac{1}{p}}<\infty
\end{equation*}%
where $0<p<\infty $ and $\omega $ is a weight function, see \cite{Jain}.
\end{definition}

A special case of the space $\Lambda _{p,\omega },$ denoted by $%
L^{p,q}\left( X,\mu \right) $ (or shortly $L^{p,q}$), $0<p<\infty $ and $%
0<q\leq \infty ,$ consists of all measurable functions $f$ on $X$ such that%
\begin{equation*}
\left\Vert f\right\Vert _{p,q}=\left\{ 
\begin{array}{c}
\left( \frac{q}{p}\tint\limits_{0}^{\infty }t^{\frac{q}{p}-1}\left[ f^{\ast
}\left( t\right) \right] ^{q}dt\right) ^{\frac{1}{q}},\text{ \ \ \ }%
0<q<\infty \\ 
\underset{0<t<\infty }{\sup }\left[ t^{\frac{1}{p}}f^{\ast }\left( t\right) %
\right] ,\text{ \ \ \ }q=\infty%
\end{array}%
\right.
\end{equation*}%
is finite. The Lorentz space $L^{p,q}$ is a Banach space with respect to the
norm $\left\Vert .\right\Vert _{p,q}$. It is known that $L^{p}\left( \mu
\right) =L^{p,q}\left( X,\mu \right) $ whenever $p=q$ where $L^{p}\left( \mu
\right) $ is the Lebesgue space. Moreover, the space $L^{p,q}$ is a subspace
of $L^{p,s}$ for $q\leq s.$ Let $0<q\leq p\leq s\leq \infty .$ Then, we have%
\begin{equation*}
L^{p,q}\subset L^{p}\left( \mu \right) \subset L^{p,s}\subset L^{p,\infty }
\end{equation*}%
,see \cite{Bloz}. Denote%
\begin{equation*}
\left\Vert f\right\Vert _{p,q}^{\ast }=\left\{ 
\begin{array}{c}
\left( \frac{q}{p}\tint\limits_{0}^{\infty }\left[ t^{\frac{1}{p}}f^{\ast
\ast }\left( t\right) \right] ^{q}\frac{dt}{t}\right) ^{\frac{1}{q}},\text{
\ \ \ }0<q<\infty \\ 
\underset{0<t<\infty }{\sup }\left[ t^{\frac{1}{p}}f^{\ast \ast }\left(
t\right) \right] ,\text{ \ \ \ }q=\infty%
\end{array}%
\right. .
\end{equation*}%
Let $1<p<\infty $ and $1\leq q\leq \infty .$ Then, we have%
\begin{equation*}
\left\Vert f\right\Vert _{p,q}\leq \left\Vert f\right\Vert _{p,q}^{\ast
}\leq \frac{p}{p-1}\left\Vert f\right\Vert _{p,q}
\end{equation*}%
for every $f\in L^{p,q},$ see \cite{Bloz}.

\begin{definition}
(see \cite{Iw})The construction of the Lorentz space $L^{p,q}$ seems to be
inspired by the Lebesgue space $L^{p},$ where $f$ is replaced by its
non-increasing rearrangement. A generalization of Lebesgue space so-called
grand Lebesgue space denoted by $L^{p)},$ which for $1<p<\infty $ consists
of all measurable functions $f$ defined on $\left( 0,1\right) $ such that%
\begin{equation*}
\left\Vert f\right\Vert _{p)}=\sup_{0<\varepsilon <p-1}\left( \varepsilon
\tint\limits_{0}^{1}\left\vert f\left( x\right) \right\vert ^{p-\varepsilon
}dx\right) ^{\frac{1}{p-\varepsilon }}<\infty .
\end{equation*}%
This norm makes the space $L^{p)}$ is a BF-space. Moreover, we have $%
L^{p}\subset L^{p)}\subset L^{p-\varepsilon }$ for $\varepsilon \in \left(
0,p-1\right) ,$ see \cite{Iw}.
\end{definition}

Now, we are ready to give the definition of grand Lorentz space $L^{p,q)}.$

\begin{definition}
(see, \cite{Jain})Assume that $1<p,q\leq \infty $ and $X=\left( 0,1\right) .$
The grand Lorentz space $L^{p,q)}\left( X,\mu \right) $ (shortly $L^{p,q)}$
or $L^{p,q)}\left( \mu \right) $)is the space of all measurable functions $f$
on $X$ such that%
\begin{equation*}
\left\Vert f\right\Vert _{p,q)}=\left\{ 
\begin{array}{c}
\underset{0<\varepsilon <q-1}{\sup }\left( \frac{q}{p}\varepsilon
\tint\limits_{0}^{1}t^{\frac{q}{p}-1}\left[ f^{\ast }\left( t\right) \right]
^{q-\varepsilon }dt\right) ^{\frac{1}{q-\varepsilon }},\text{ \ \ \ }%
1<q<\infty \\ 
\underset{0<t<1}{\sup }\left[ t^{\frac{1}{p}}f^{\ast }\left( t\right) \right]
,\text{ \ \ \ }q=\infty%
\end{array}%
\right.
\end{equation*}%
is finite. It is note that these spaces coincide with the grand Lebesgue
spaces $L^{p)}$ whenever $q=p$. Also, the space $L^{p,q)}$ is a BF-space
under the condition $1<q<p<\infty .$
\end{definition}

Throughout this paper, we will accept that $\left( X,\Sigma \right) $ is a $%
\sigma $-finite measurable space where $X=\left( 0,1\right) $. Moreover, we
will assume that $1<p,q,r,s\leq \infty .$ Also, if two measures $\mu $ and $%
\nu $ are absolutely continuous with respect to each other (denoted by $\mu
\ll \nu $ and $\nu \ll \mu $), then we denote it by the symbol $\mu \approx
\nu .$ We shall throughout denote $W\left( t\right)
=\dint\limits_{0}^{t}\omega \left( s\right) ds$ and $V\left( t\right)
=\dint\limits_{0}^{t}\vartheta \left( s\right) ds$ for $t>0.$

\section{Inclusion Theorems of Grand Lorentz Spaces $L^{p,q)}$}

In this section, we will discuss and investigate the existence of the
inclusion $L^{p,q)}\left( X,\mu \right) \subseteq L^{r,s)}\left( X,\nu
\right) $ where $\mu $ and $\nu $ are different measures.

\begin{theorem}
Let $1<q<p.$ If $\left( f_{n}\right) _{n\in 
%TCIMACRO{\U{2115} }%
%BeginExpansion
\mathbb{N}
%EndExpansion
}$ convergences to $f$ in $L^{p,q)}\left( X,\mu \right) $ then $\left(
f_{n}\right) _{n\in 
%TCIMACRO{\U{2115} }%
%BeginExpansion
\mathbb{N}
%EndExpansion
}$ convergences to $f$ in measure.
\end{theorem}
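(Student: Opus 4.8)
The plan is to bound the $\mu$-measure of the set where $|f_n - f|$ is large in terms of the grand Lorentz norm $\|f_n - f\|_{p,q)}$, and then let $n\to\infty$. Fix $\delta>0$ and set $A_{n,\delta}=\{x\in X:|f_n(x)-f(x)|>\delta\}$. The key observation is that $\lambda_{f_n-f}(\delta)=\mu(A_{n,\delta})$ and that, by the definition of the decreasing rearrangement, $(f_n-f)^\ast(t)>\delta$ for every $t<\lambda_{f_n-f}(\delta)$ (using right-continuity and monotonicity of $(f_n-f)^\ast$). Since $\mu$ is a finite measure on $X=(0,1)$, we may assume $\mu(A_{n,\delta})\le 1$, so that on the interval $\bigl(0,\mu(A_{n,\delta})\bigr)$ the integrand $t^{\frac{q}{p}-1}\bigl[(f_n-f)^\ast(t)\bigr]^{q-\varepsilon}$ is bounded below by $t^{\frac{q}{p}-1}\delta^{q-\varepsilon}$.

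Next I would pick a convenient $\varepsilon$, say a fixed $\varepsilon_0\in(0,q-1)$, and estimate from the definition of $\|\cdot\|_{p,q)}$:
\[
\left(\|f_n-f\|_{p,q)}\right)^{q-\varepsilon_0}\ \ge\ \frac{q}{p}\,\varepsilon_0\int_0^{\mu(A_{n,\delta})} t^{\frac{q}{p}-1}\bigl[(f_n-f)^\ast(t)\bigr]^{q-\varepsilon_0}\,dt\ \ge\ \frac{q}{p}\,\varepsilon_0\,\delta^{q-\varepsilon_0}\int_0^{\mu(A_{n,\delta})} t^{\frac{q}{p}-1}\,dt.
\]
Evaluating the last integral gives $\frac{p}{q}\mu(A_{n,\delta})^{q/p}$, so that
\[
\left(\|f_n-f\|_{p,q)}\right)^{q-\varepsilon_0}\ \ge\ \varepsilon_0\,\delta^{q-\varepsilon_0}\,\mu(A_{n,\delta})^{q/p},
\]
which rearranges to $\mu(A_{n,\delta})\le \bigl(\varepsilon_0^{-1}\delta^{-(q-\varepsilon_0)}\bigr)^{p/q}\,\|f_n-f\|_{p,q)}^{p(q-\varepsilon_0)/q}$. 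Since $\|f_n-f\|_{p,q)}\to 0$ and $\delta,\varepsilon_0$ are fixed, the right-hand side tends to $0$; as $\delta>0$ was arbitrary, $f_n\to f$ in $\mu$-measure.

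The main point requiring care is the lower bound $(f_n-f)^\ast(t)\ge\delta$ on $\bigl(0,\lambda_{f_n-f}(\delta)\bigr)$: strictly speaking one has $(f_n-f)^\ast(t)\ge\delta$ there, with possible equality at the endpoint, but since we integrate over an open interval and only need a lower bound up to (not including) $\mu(A_{n,\delta})$, a slightly smaller threshold $\delta'<\delta$ — or the sup-characterization $f^\ast(t)=\sup\{y>0:\lambda_f(y)>t\}$ — handles this cleanly; I would phrase the argument using $\delta'$ and then let $\delta'\uparrow\delta$, or simply note $\lambda_{f_n-f}(\delta)<\lambda_{f_n-f}(\delta')$ suffices. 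One should also dispatch the trivial case where some $\mu(A_{n,\delta})=0$ separately (the inequality is then immediate). No other step presents a real obstacle; the estimate is the standard Chebyshev-type argument adapted to the grand Lorentz functional, using only finiteness of $\mu$ and the explicit form of $\|\cdot\|_{p,q)}$ for $1<q<p$.
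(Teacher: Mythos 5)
Your proof is correct, but it takes a genuinely different route from the paper's. The paper's argument is indirect: it invokes the fact that $L^{p,q)}\left( X,\mu \right)$ is a BF-space for $1<q<p$, so that the identity embeds continuously into $L_{loc}^{1}$, deduces $f_{n}\rightarrow f$ in $L^{1}\left( X\right)$, and then (implicitly) uses that $L^{1}$-convergence implies convergence in measure. You instead run a direct Chebyshev-type estimate on the distribution function: from $\left( f_{n}-f\right) ^{\ast }\left( t\right) \geq \delta$ for $0<t<\lambda _{f_{n}-f}\left( \delta \right)$ and a single fixed $\varepsilon _{0}\in \left( 0,q-1\right)$ you extract the quantitative bound $\mu \left( A_{n,\delta }\right) \leq \left( \varepsilon _{0}^{-1}\delta ^{-\left( q-\varepsilon _{0}\right) }\right) ^{p/q}\left\Vert f_{n}-f\right\Vert _{p,q)}^{p\left( q-\varepsilon _{0}\right) /q}$. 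This buys you two things the paper's proof does not give: an explicit rate for $\mu \left( A_{n,\delta }\right)$, and an argument that works directly with the measure $\mu$ via $\lambda _{f_{n}-f}$ — whereas the BF-space embedding controls the Lebesgue integral $\int \left\vert f_{n}-f\right\vert \,dx$, which is not literally the same as convergence in $\mu$-measure unless one adds a word about $\mu \approx dx$; your route sidesteps that gap entirely. Two small remarks: the lower bound $\left( f_{n}-f\right) ^{\ast }\left( t\right) \geq \delta$ on $\left( 0,\lambda _{f_{n}-f}\left( \delta \right) \right)$ follows immediately from the infimum characterization (any $y\leq \delta$ has $\lambda _{f_{n}-f}\left( y\right) \geq \lambda _{f_{n}-f}\left( \delta \right) >t$), so the $\delta ^{\prime }$ device is unnecessary; and the case $\mu \left( A_{n,\delta }\right) >1$ is dispatched by integrating only up to $1$, which forces $\left\Vert f_{n}-f\right\Vert _{p,q)}^{q-\varepsilon _{0}}\geq \varepsilon _{0}\delta ^{q-\varepsilon _{0}}$, impossible for large $n$.
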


\begin{proof}
Let $\left( f_{n}\right) _{n\in 
%TCIMACRO{\U{2115} }%
%BeginExpansion
\mathbb{N}
%EndExpansion
}$ convergences to $f$ in $L^{p,q)}\left( X,\mu \right) .$ Then, for $%
\varepsilon \in \left( 0,q-1\right) $ we have%
\begin{equation}
\left( \frac{q}{p}\varepsilon \tint\limits_{0}^{1}t^{\frac{q}{p}-1}\left(
\left( f_{n}-f\right) ^{\ast }\left( t\right) \right) ^{q-\varepsilon }d\mu
\left( t\right) \right) ^{\frac{1}{q-\varepsilon }}\leq \left\Vert
f_{n}-f\right\Vert _{p,q),\mu }\longrightarrow 0  \label{BF1}
\end{equation}%
as $n\longrightarrow \infty .$ Since the space $L^{p,q)}\left( X,\mu \right) 
$ is a BF-space, we have%
\begin{equation}
\dint\limits_{X}\left( f_{n}\left( x\right) -f\left( x\right) \right) dx\leq
C_{X}\left\Vert f_{n}-f\right\Vert _{p,q),\mu }.  \label{BF2}
\end{equation}%
By (\ref{BF1}) and (\ref{BF2}), we get that $\left( f_{n}\right) _{n\in 
%TCIMACRO{\U{2115} }%
%BeginExpansion
\mathbb{N}
%EndExpansion
}$ convergences to $f$ in $L^{1}\left( X\right) .$ This completes the proof.
\end{proof}

\begin{theorem}
\label{equivaltheo}Assume that $1<p,r<\infty $ and $1<q,s\leq \infty $. Then
the space $L^{p,q)}\left( X,\mu \right) $ is a subspace of $L^{r,s)}\left(
X,\nu \right) $ in sense of equivalence classes if and only if $\mu \approx
\nu $ and the space $L^{p,q)}\left( X,\mu \right) $ is a subspace of $%
L^{r,s)}\left( X,\nu \right) $ in sense of individual functions.
\end{theorem}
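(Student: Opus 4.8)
The plan is to prove the non-trivial direction, namely that if $L^{p,q)}(X,\mu)\subseteq L^{r,s)}(X,\nu)$ holds at the level of equivalence classes, then $\mu\approx\nu$ and the inclusion holds for individual functions; the converse is immediate since if $\mu\approx\nu$ then $\mu$-null sets and $\nu$-null sets coincide, so the two spaces have literally the same underlying equivalence classes and a representative-level inclusion upgrades to an individual-function one automatically.

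For the forward direction I would first establish $\nu\ll\mu$. Suppose not: there is a set $E\in\Sigma$ with $\mu(E)=0$ but $\nu(E)>0$. Consider $f=\chi_E$ (or a suitable multiple); with respect to $\mu$ the function $f$ is $\mu$-a.e. zero, so its decreasing rearrangement $f^{\ast}$ (computed via $\lambda_f(y)=\mu\{f>y\}$) vanishes, hence $\|f\|_{p,q),\mu}=0$ and $f$ represents the zero class in $L^{p,q)}(X,\mu)$. But the same function, viewed in $L^{r,s)}(X,\nu)$ via the $\nu$-distribution function, has $\lambda_f^{\nu}(y)=\nu(E)>0$ for $0<y<1$, so $f^{\ast}$ with respect to $\nu$ is bounded below on $(0,\nu(E))$ and $\|f\|_{r,s),\nu}>0$. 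Thus the zero class of $L^{p,q)}(X,\mu)$ is sent to a nonzero class of $L^{r,s)}(X,\nu)$ — which contradicts the inclusion being well defined on equivalence classes (it must send $0$ to $0$). Hence $\nu\ll\mu$. The symmetric argument does not immediately give $\mu\ll\nu$ from the same hypothesis, so the second step is to get $\mu\ll\nu$: here I would use that once $\nu\ll\mu$ is known, every $\mu$-null set is $\nu$-null, so the map on classes is genuinely the identity on representatives; an inclusion $L^{p,q)}(X,\mu)\subseteq L^{r,s)}(X,\nu)$ of classes then forces, for any $E$ with $\nu(E)=0$ but $\mu(E)>0$, the function $\chi_E$ to be nonzero in $L^{p,q)}(X,\mu)$ but zero in $L^{r,s)}(X,\nu)$; this is not yet a contradiction by itself, so I would instead invoke a closed-graph / uniform-boundedness type argument — since both are BF-spaces (Banach, under $1<q<p$, $1<s<r$) the inclusion is automatically bounded, and a bounded injection-on-classes cannot collapse a nonzero vector to zero, giving $\mu\ll\nu$.

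With $\mu\approx\nu$ in hand, the final step is routine: absolute continuity in both directions means $\Sigma$-measurable sets are $\mu$-null iff $\nu$-null, so two measurable functions agree $\mu$-a.e. iff they agree $\nu$-a.e., the distribution functions $\lambda_f^{\mu}$ and $\lambda_f^{\nu}$ depend only on the common class of $f$, and consequently the equivalence-class inclusion and the individual-function inclusion are the same statement. I would phrase this by fixing $f\in L^{p,q)}(X,\mu)$ individually, noting its class lies in $L^{r,s)}(X,\nu)$ by hypothesis, and then observing that any $\nu$-representative of that class is $\nu$-a.e. — hence $\mu$-a.e. — equal to $f$, so $f$ itself lies in $L^{r,s)}(X,\nu)$.

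The main obstacle I anticipate is the asymmetry in deriving $\mu\ll\nu$: the naive test-function argument cleanly yields only $\nu\ll\mu$, and closing the loop requires either exploiting the Banach (BF-space) structure to rule out a nonzero class mapping to zero, or an additional hypothesis hidden in how the authors define "subspace in sense of equivalence classes" (perhaps they build in that the inclusion is norm-continuous, or that it is the identity map). I would make the boundedness of the inclusion explicit — it follows from both spaces being BF-spaces continuously embedded in $L^1_{loc}$ together with the closed graph theorem — and use it as the lever for the second absolute-continuity statement.
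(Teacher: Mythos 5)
Your converse direction, your derivation of $\nu\ll\mu$ via the characteristic function of a $\mu$-null set, and your final upgrade from equivalence classes to individual functions all match the paper's proof. The genuine gap is in your second step, where you try to extract $\mu\ll\nu$ from a closed-graph/boundedness argument. Boundedness of the inclusion does not give injectivity: a bounded linear map can perfectly well send a nonzero vector to zero, and ``the inclusion is an injection on classes'' is not a free hypothesis here --- it is essentially equivalent to the statement $\mu\ll\nu$ that you are trying to prove, so the argument as written is circular. Moreover, the closed graph theorem requires both spaces to be complete, which the paper guarantees only under $1<q<p<\infty$ (and correspondingly $1<s<r<\infty$), whereas the theorem is asserted for all $1<q,s\le\infty$, including $q=\infty$ and $q\ge p$; so the functional-analytic route could not cover the stated generality even if the circularity were repaired.

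The fix, which is what the paper's terse ``one can prove $\mu\ll\nu$ with the similar method'' amounts to, is purely set-theoretic and needs no norm estimates. An element of $L^{r,s)}\left( X,\nu\right)$ is a full $\nu$-equivalence class. The hypothesis says the $\mu$-class of $0$ is such an element; since that class contains the zero function, it must coincide, as a set of functions, with the $\nu$-class of $0$. Hence the functions vanishing $\mu$-almost everywhere are exactly the functions vanishing $\nu$-almost everywhere. One containment gives $\nu\ll\mu$ (your $\chi_{E}$ argument); the reverse containment gives $\mu\ll\nu$: if $\nu\left( B\right) =0$ then $\chi_{B}$ lies in the $\nu$-class of $0$, hence in the $\mu$-class of $0$, hence $\mu\left( B\right) =0$. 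Replacing your closed-graph paragraph with this observation closes the gap and brings your argument in line with the paper's.
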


\begin{proof}
The sufficient condition of the theorem is clear. Now, we assume that $%
L^{p,q)}\left( X,\mu \right) \subseteq L^{r,s)}\left( X,\nu \right) $ holds
in the sense of equivalence classes. Let $f\in L^{p,q)}\left( X,\mu \right) $
be any individual function. This implies $f\in L^{p,q)}\left( X,\mu \right) $
in the sense of equivalent classes thus $f\in L^{r,s)}\left( X,\nu \right) $
in the sense of equivalent classes by the assumption. Hence we have $f\in
L^{r,s)}\left( X,\nu \right) $ in the sense of individual functions. This
shows that%
\begin{equation*}
L^{p,q)}\left( X,\mu \right) \subseteq L^{r,s)}\left( X,\nu \right)
\end{equation*}%
in sense of individual functions. Now, let us take any $A\in \Sigma $ with $%
\mu \left( A\right) =0.$ If $\chi _{A}$ is the characteristic function of $A$
then $\chi _{A}=0$ $\mu $-almost everywhere. Also, the rearrangement of $%
\chi _{A}$ is%
\begin{equation*}
\chi _{A}^{\ast }\left( t\right) =\left\{ 
\begin{array}{c}
1,\text{ \ \ \ }0<t<\mu \left( A\right) \\ 
0,\text{ \ \ \ }t\geq \mu \left( A\right)%
\end{array}%
\right. .
\end{equation*}%
Since $\mu \left( A\right) \leq 1$ and the function $\varepsilon ^{\frac{1}{%
q-\varepsilon }}$ is increasing in $\varepsilon \in \left( 0,q-1\right) $,\
we get%
\begin{eqnarray}
\left\Vert \chi _{A}\right\Vert _{p,q)} &=&\underset{0<\varepsilon <q-1}{%
\sup }\left( \frac{q}{p}\varepsilon \tint\limits_{0}^{1}t^{\frac{q}{p}-1}%
\left[ \chi _{A}^{\ast }\left( t\right) \right] ^{q-\varepsilon }dt\right) ^{%
\frac{1}{q-\varepsilon }}  \notag \\
&=&\underset{0<\varepsilon <q-1}{\sup }\left( \frac{q}{p}\varepsilon
\tint\limits_{0}^{\mu \left( A\right) }t^{\frac{q}{p}-1}\left[ \chi
_{A}^{\ast }\left( t\right) \right] ^{q-\varepsilon }dt\right) ^{\frac{1}{%
q-\varepsilon }}  \notag \\
&=&\underset{0<\varepsilon <q-1}{\sup }\left( \varepsilon \left( \mu \left(
A\right) \right) ^{\frac{q}{p}}\right) ^{\frac{1}{q-\varepsilon }}=\left(
q-1\right) \left( \mu \left( A\right) \right) ^{\frac{1}{p}}=0.
\label{indivi1}
\end{eqnarray}%
Now, assume that $1<p<\infty $ and $q=\infty .$ This yields%
\begin{equation}
\left\Vert \chi _{A}\right\Vert _{p,\infty ),\mu }=\underset{0<t<1}{\sup }%
\left[ t^{\frac{1}{p}}\chi _{A}^{\ast }\left( t\right) \right] =\mu \left(
A\right) =0.  \label{indivi2}
\end{equation}%
By (\ref{indivi1}) and (\ref{indivi2}), we have $\chi _{A}\in L^{p,q)}\left(
X,\mu \right) $ for $1<p<\infty $ and $1<q\leq \infty .$ Hence, the
characteristic function of $A$ is in the equivalent classes of $0\in
L^{p,q)}\left( X,\mu \right) $. Since the equivalence classes of $0$ is also
an element of $L^{r,s)}\left( X,\nu \right) ,$ then $\chi _{A}$ is in the
equivalent classes of $0\in L^{r,s)}\left( X,\nu \right) $ with respect to $%
\nu .$ This follows that $\nu \left( A\right) =0.$ Hence, we obtain that $%
\nu \ll \mu .$ One can prove that $\mu \ll \nu $ with the similar method.
This completes the proof.
\end{proof}

\begin{theorem}
\label{subspace}The space $L^{p,q)}\left( X,\mu \right) $ is a subspace of $%
L^{r,s)}\left( X,\nu \right) $ in sense of equivalence classes if and only
if $\mu \approx \nu $ and there is a $C>0$ such that%
\begin{equation}
\left\Vert f\right\Vert _{r,s),\nu }\leq C\left\Vert f\right\Vert _{p,q),\mu
}  \label{equi1}
\end{equation}%
for every $f\in L^{p,q)}\left( X,\mu \right) $.
\end{theorem}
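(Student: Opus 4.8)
The plan is to obtain (\ref{equi1}) from the closed graph theorem, the converse implication being routine. For the sufficiency, suppose $\mu \approx \nu $ and that (\ref{equi1}) holds. If $f\in L^{p,q)}\left( X,\mu \right) $, then, since $\mu $ and $\nu $ have the same null sets, $f$ is $\nu $-measurable, and $\left\Vert f\right\Vert _{r,s),\nu }\leq C\left\Vert f\right\Vert _{p,q),\mu }<\infty $; hence $f\in L^{r,s)}\left( X,\nu \right) $, so the inclusion holds in the sense of equivalence classes.

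For the necessity, assume $L^{p,q)}\left( X,\mu \right) \subseteq L^{r,s)}\left( X,\nu \right) $ in the sense of equivalence classes. First I would invoke Theorem~\ref{equivaltheo} to conclude that $\mu \approx \nu $ and that the inclusion also holds in the sense of individual functions. Then I would consider the inclusion map $T:L^{p,q)}\left( X,\mu \right) \rightarrow L^{r,s)}\left( X,\nu \right) $, $Tf=f$, which is well defined by the inclusion and clearly linear. Since both $L^{p,q)}\left( X,\mu \right) $ and $L^{r,s)}\left( X,\nu \right) $ are complete---being BF-spaces, or the weak Lorentz spaces $L^{p,\infty }$, $L^{r,\infty }$ when $q=\infty $, $s=\infty $---it suffices, by the closed graph theorem, to show that $T$ has closed graph.

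To this end, let $f_{n}\rightarrow f$ in $L^{p,q)}\left( X,\mu \right) $ and $f_{n}=Tf_{n}\rightarrow g$ in $L^{r,s)}\left( X,\nu \right) $. Arguing exactly as in the proof of the first theorem of this section (that is, using the defining inequality of a BF-space), convergence in $L^{p,q)}\left( X,\mu \right) $ forces $f_{n}\rightarrow f$ in $L^{1}\left( X\right) $, and likewise $f_{n}\rightarrow g$ in $L^{1}\left( X\right) $; by uniqueness of limits in $L^{1}\left( X\right) $ we obtain $f=g$ almost everywhere, and since $\mu \approx \nu $ this means $f$ and $g$ determine the same equivalence class. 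Hence the graph of $T$ is closed, $T$ is bounded, and there is $C>0$ with $\left\Vert f\right\Vert _{r,s),\nu }=\left\Vert Tf\right\Vert _{r,s),\nu }\leq C\left\Vert f\right\Vert _{p,q),\mu }$ for every $f\in L^{p,q)}\left( X,\mu \right) $, which is (\ref{equi1}). The step I expect to be the main obstacle is precisely this closed-graph verification: one has to push norm convergence in each of the two distinct grand Lorentz spaces down to a single ambient mode of convergence (in $L^{1}\left( X\right) $) in which limits are unique, and it is the BF-space structure together with $\mu \approx \nu $ that makes this legitimate.
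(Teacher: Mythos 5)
Your proof is correct and follows essentially the same route as the paper: the paper packages the closed-graph argument as completeness of the sum norm $\Vert \cdot \Vert _{p,q),\mu }+\Vert \cdot \Vert _{r,s),\nu }$ on $L^{p,q)}\left( X,\mu \right) $ followed by Banach's bounded-inverse theorem applied to the identity map, which is the standard equivalent of your direct appeal to the closed graph theorem. The only cosmetic difference is in how the two limits are identified: you push both convergences down to $L^{1}\left( X\right) $ via the BF-space embedding, whereas the paper uses the embeddings $L^{p,q)}\hookrightarrow L^{p,q-\varepsilon }$ and $L^{r,s)}\hookrightarrow L^{r,s-\varepsilon }$ to extract almost-everywhere convergent subsequences and then invokes $\mu \approx \nu $.
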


\begin{proof}
Assume that $L^{p,q)}\left( X,\mu \right) \subseteq L^{r,s)}\left( X,\nu
\right) $ in sense of equivalence classes. Moreover, we define the sum norm
on $L^{p,q)}\left( X,\mu \right) $%
\begin{equation*}
\left\Vert \left\vert .\right\vert \right\Vert =\left\Vert .\right\Vert
_{p,q),\mu }+\left\Vert .\right\Vert _{r,s),\nu }.
\end{equation*}%
The space $L^{p,q)}\left( X,\mu \right) $ is a Banach space in sense to $%
\left\Vert \left\vert .\right\vert \right\Vert .$ To prove this, we assume
that $\left( f_{n}\right) _{n\in 
%TCIMACRO{\U{2115} }%
%BeginExpansion
\mathbb{N}
%EndExpansion
}$ is a Cauchy sequence in $L^{p,q)}\left( X,\mu \right) $. Then for all $%
\beta >0$ there exists $N\left( \beta \right) >0$ whenever $n,m>N\left(
\beta \right) $ such that%
\begin{equation*}
\left\Vert f_{n}-f_{m}\right\Vert _{p,q),\mu }=\underset{0<\varepsilon <q-1}{%
\sup }\left( \frac{q}{p}\varepsilon \tint\limits_{0}^{1}t^{\frac{q}{p}-1}%
\left[ \left( f_{n}-f_{m}\right) ^{\ast }\left( t\right) \right]
^{q-\varepsilon }dt\right) ^{\frac{1}{q-\varepsilon }}<\beta
\end{equation*}%
and%
\begin{equation*}
\left\Vert f_{n}-f_{m}\right\Vert _{r,s),\nu }=\underset{0<\varepsilon <s-1}{%
\sup }\left( \frac{s}{r}\varepsilon \tint\limits_{0}^{1}t^{\frac{s}{r}-1}%
\left[ \left( f_{n}-f_{m}\right) ^{\ast }\left( t\right) \right]
^{s-\varepsilon }dt\right) ^{\frac{1}{s-\varepsilon }}<\beta .
\end{equation*}%
This yields that $\left( f_{n}\right) _{n\in 
%TCIMACRO{\U{2115} }%
%BeginExpansion
\mathbb{N}
%EndExpansion
}$ is also a Cauchy sequence in $L^{p,q)}\left( X,\mu \right) $ and $%
L^{r,s)}\left( X,\nu \right) $, and $\left( f_{n}\right) _{n\in 
%TCIMACRO{\U{2115} }%
%BeginExpansion
\mathbb{N}
%EndExpansion
}$ converges to functions $f\in L^{p,q)}\left( X,\mu \right) $ and $g\in
L^{r,s)}\left( X,\nu \right) ,$ respectively. By (\ref{inclusion}), we have%
\begin{equation}
L^{p,q)}\left( X,\mu \right) \hookrightarrow L^{p,q-\varepsilon }\left(
X,\mu \right)  \label{embedd3}
\end{equation}%
for $\varepsilon \in \left( 0,q-1\right) $ and%
\begin{equation}
L^{r,s)}\left( X,\nu \right) \hookrightarrow L^{r,s-\varepsilon }\left(
X,\nu \right)  \label{embedd4}
\end{equation}%
for $\varepsilon \in \left( 0,s-1\right) .$ If we use (\ref{embedd3}), then
we obtain that there is a subsequence $\left( f_{n_{i}}\right) _{i\in 
%TCIMACRO{\U{2115} }%
%BeginExpansion
\mathbb{N}
%EndExpansion
}$ of $\left( f_{n}\right) _{n\in 
%TCIMACRO{\U{2115} }%
%BeginExpansion
\mathbb{N}
%EndExpansion
}$ such that $f_{n_{i}}\longrightarrow f$ $\left( \mu \text{-almost
everywhere}\right) $. Moreover, it is easy to prove that $\left(
f_{n_{i}}\right) _{n\in 
%TCIMACRO{\U{2115} }%
%BeginExpansion
\mathbb{N}
%EndExpansion
}$ converges to $g$ in $L^{q(.),\theta }\left( \nu \right) $ and $%
f_{n_{i}}\longrightarrow g$ $\left( \nu \text{-almost everywhere}\right) $
due to (\ref{embedd4}). Hence, one can find a subsequence $\left(
f_{n_{i_{k}}}\right) $ of $\left( f_{n_{i}}\right) $ such that $%
f_{n_{i_{k}}}\longrightarrow g$ $\left( \nu \text{-almost everywhere}\right) 
$. If we consider the space $L^{p,q)}\left( X,\mu \right) $ is a subspace of 
$L^{r,s)}\left( X,\nu \right) $ in the sense of equivalence classes, then we
have $\mu \approx \nu $ by Theorem \ref{equivaltheo}. This follows that%
\begin{equation*}
\left\vert f(x)-g(x)\right\vert \leq \left\vert
f(x)-f_{n_{i_{k}}}(x)\right\vert +\left\vert
f_{n_{i_{k}}}(x)-g(x)\right\vert ,
\end{equation*}%
that we have $f=g$ $\left( \mu \text{-almost everywhere}\right) $. Since $%
\mu \approx \nu ,$ we obtain $f=g$ $\left( \nu \text{-almost everywhere}%
\right) $, and $f_{n}\longrightarrow f$ in $L^{p,q)}\left( X,\mu \right) $
with respect to the norm $\left\Vert \left\vert .\right\vert \right\Vert .$
Now, we define the unit function $I$ from $\left( L^{p,q)}\left( X,\mu
\right) ,\left\Vert \left\vert .\right\vert \right\Vert \right) $ into $%
\left( L^{p,q)}\left( X,\mu \right) ,\left\Vert .\right\Vert _{p,q),\mu
}\right) .$ Since%
\begin{equation*}
\left\Vert I\left( f\right) \right\Vert _{p,q),\mu }=\left\Vert f\right\Vert
_{p,q),\mu }\leq \left\Vert \left\vert f\right\vert \right\Vert ,
\end{equation*}%
then $I$ is continuous. If we consider the Banach's theorem, then $I$ is a
homeomorphism, see \cite{Car}. This yields the norms $\left\Vert \left\vert
.\right\vert \right\Vert $ and $\left\Vert .\right\Vert _{p,q),\mu }$ are
equivalent. Thus there exists a $C>0$ such that%
\begin{equation*}
\left\Vert f\right\Vert \leq C\left\Vert f\right\Vert _{p,q),\mu }
\end{equation*}%
for all $f\in L^{p,q)}\left( X,\mu \right) $. Finally, we have%
\begin{equation*}
\left\Vert f\right\Vert _{r,s),\nu }\leq \left\Vert f\right\Vert \leq
C\left\Vert f\right\Vert _{p,q),\mu }.
\end{equation*}%
This completes the necessity part of the proof. Now, we suppose that $\mu
\approx \nu $ and the inequality (\ref{equi1}) holds for $L^{p,q)}\left(
X,\mu \right) $. Then, we have $L^{p,q)}\left( X,\mu \right) \subseteq
L^{r,s)}\left( X,\nu \right) $ in the sense of individual functions. By
Theorem \ref{equivaltheo}, the space $L^{p,q)}\left( X,\mu \right) $ is a
subspace of $L^{r,s)}\left( X,\nu \right) $ in the sense of equivalence
classes. If $1<p,r<\infty $ and $q=s=\infty ,$ the proof follows from%
\begin{equation*}
\left\Vert f\right\Vert _{p,\infty ),\mu }=\underset{0<t<1}{\sup }\left[ t^{%
\frac{1}{p}}f^{\ast }\left( t\right) \right] \leq \underset{0<t<1}{\sup }%
\left[ t^{\frac{1}{p}}f^{\ast \ast }\left( t\right) \right] =\left\Vert
f\right\Vert _{p,\infty ),\mu }^{\ast }\leq \frac{p}{p-1}\left\Vert
f\right\Vert _{p,\infty ),\mu }.
\end{equation*}
\end{proof}

\begin{theorem}
The following statements are equivalent.

\begin{enumerate}
\item[\textit{(i)}] The inclusion $L^{p,q)}\left( X,\mu \right) \subseteq
L^{p,q)}\left( X,\nu \right) $ holds.

\item[\textit{(ii)}] $\mu \approx \nu $ and there is a $C>0$ such that%
\begin{equation*}
\nu \left( A\right) \leq C\mu \left( A\right)
\end{equation*}%
for all $A\in \Sigma .$

\item[\textit{(iii)}] We have $L^{1}\left( \mu \right) \subseteq L^{1}\left(
\nu \right) .$
\end{enumerate}
\end{theorem}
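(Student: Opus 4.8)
The plan is to establish the equivalences $(i)\Leftrightarrow(ii)$ and $(ii)\Leftrightarrow(iii)$; throughout, the inclusions are understood in the sense of equivalence classes (as in Theorems \ref{equivaltheo} and \ref{subspace}), and I take $1<p<\infty$, so that those theorems apply with $r=p$ and $s=q$. Write $f^{\ast}_{\mu}$, $f^{\ast}_{\nu}$ for the rearrangement of $f$ with respect to $\mu$ and $\nu$, and similarly $\lambda^{\mu}_{f}$, $\lambda^{\nu}_{f}$ for the distribution functions.

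For $(i)\Leftrightarrow(ii)$, Theorem \ref{subspace} (with $r=p$, $s=q$) says that (i) holds if and only if $\mu\approx\nu$ and there is a $C>0$ with $\left\Vert f\right\Vert_{p,q),\nu}\leq C\left\Vert f\right\Vert_{p,q),\mu}$ for all $f$; since $\mu\approx\nu$ also appears in (ii), it remains to show that, granting $\mu\approx\nu$, this norm inequality is equivalent to the estimate $\nu(A)\leq C'\mu(A)$ for all $A\in\Sigma$. For the implication from the measure estimate to the norm inequality I would argue: assuming (one may assume) $C\geq1$, the bound $\nu(A)\leq C\mu(A)$ gives $\lambda^{\nu}_{f}(y)\leq C\lambda^{\mu}_{f}(y)$ for all $y>0$, hence $f^{\ast}_{\nu}(t)\leq f^{\ast}_{\mu}(t/C)$ for all $t>0$; substituting this into the definition of $\left\Vert f\right\Vert_{p,q),\nu}$, changing variables $t\mapsto Ct$ (the rescaled interval $(0,1/C)$ lies inside $(0,1)$ because $C\geq1$), and using $C^{q/(p(q-\varepsilon))}\leq C^{q/p}$ on $\varepsilon\in(0,q-1)$, one obtains $\left\Vert f\right\Vert_{p,q),\nu}\leq C^{q/p}\left\Vert f\right\Vert_{p,q),\mu}$ (with $C^{1/p}$ in place of $C^{q/p}$ when $q=\infty$). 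For the converse I would test the norm inequality on characteristic functions: since $\chi_{A}^{\ast}$ equals $1$ on $(0,\mu(A))$ and $0$ afterwards, a direct computation gives $\left\Vert\chi_{A}\right\Vert_{p,q),\mu}=\Phi(\mu(A))$, where $\Phi(m)=\sup_{0<\varepsilon<q-1}(\varepsilon\,m^{q/p})^{1/(q-\varepsilon)}$ for $1<q<\infty$ and $\Phi(m)=m^{1/p}$ for $q=\infty$, and the same $\Phi$ computes $\left\Vert\chi_{A}\right\Vert_{p,q),\nu}$ from $\nu(A)$; then the homogeneity estimate $\lambda^{1/p}\Phi(m)\leq\Phi(\lambda m)\leq\lambda^{q/p}\Phi(m)$ for $\lambda\geq1$ (immediate from $\tfrac{1}{p}\leq\tfrac{q}{p(q-\varepsilon)}\leq\tfrac{q}{p}$) turns $\Phi(\nu(A))\leq C\Phi(\mu(A))$ into $\nu(A)\leq C^{p}\mu(A)$ for every $A\in\Sigma$.

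For $(ii)\Leftrightarrow(iii)$, which is essentially the $p=q=1$ analogue, the forward direction is a Radon--Nikodym computation: $\mu\approx\nu$ supplies $h=d\nu/d\mu$, and $\nu(A)\leq C\mu(A)$ for all $A$ forces $0\leq h\leq C$ $\mu$-almost everywhere, so $\int_{X}\left\vert f\right\vert d\nu=\int_{X}\left\vert f\right\vert h\,d\mu\leq C\int_{X}\left\vert f\right\vert d\mu$ and hence $L^{1}(\mu)\subseteq L^{1}(\nu)$. For the reverse direction, reading $L^{1}(\mu)\subseteq L^{1}(\nu)$ as an inclusion of equivalence classes already forces $\mu\approx\nu$ (needed for the identification of $\mu$-classes with $\nu$-classes to be a well-defined injection); then the Banach bounded-inverse argument used in the proof of Theorem \ref{subspace} — a sequence that is Cauchy for the sum norm $\left\Vert\cdot\right\Vert_{L^{1}(\mu)}+\left\Vert\cdot\right\Vert_{L^{1}(\nu)}$ has a subsequence converging $\mu$-almost everywhere to its $L^{1}(\mu)$-limit and, since $\mu\approx\nu$, $\nu$-almost everywhere to its $L^{1}(\nu)$-limit, so the two limits agree — yields a $C>0$ with $\left\Vert f\right\Vert_{L^{1}(\nu)}\leq C\left\Vert f\right\Vert_{L^{1}(\mu)}$, and testing on $\chi_{A}$ (where $\left\Vert\chi_{A}\right\Vert_{L^{1}(\mu)}=\mu(A)$) gives $\nu(A)\leq C\mu(A)$; this is (ii). (Alternatively, this equivalence may be quoted from the classical results of Subramanian, Romero and Miamee.)

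I expect the main obstacle to be the characteristic-function step in the proof of $(i)\Rightarrow(ii)$. Estimating $\Phi$ crudely in $\Phi(\nu(A))\leq C\Phi(\mu(A))$ only yields $\nu(A)\leq C\mu(A)^{1/q}$, which is strictly weaker than (ii); to recover the linear comparison one must genuinely exploit the scaling behaviour of $\Phi$, equivalently be careful about where the supremum over $\varepsilon$ is attained when $\mu(A)$ is small, since it need not sit at the endpoint $\varepsilon\to q-1$. The remaining points are routine bookkeeping: keeping the rescaled interval inside $(0,1)$ in $(ii)\Rightarrow(i)$, and making the equivalence-class conventions explicit so that Theorems \ref{equivaltheo}, \ref{subspace} and their $L^{1}$ analogue can be invoked cleanly.
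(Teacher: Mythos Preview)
Your argument is correct, but it is organised differently from the paper's. The paper proves the cycle $(i)\Rightarrow(ii)\Rightarrow(iii)\Rightarrow(i)$ rather than your two separate equivalences. For $(i)\Rightarrow(ii)$ the paper, like you, invokes Theorem~\ref{subspace} and tests on characteristic functions, but it simply asserts the closed form $\left\Vert\chi_{A}\right\Vert_{p,q),\mu}=(q-1)\mu(A)^{1/p}$ from~\eqref{indivi1} and reads off $\nu(A)\leq C_{1}^{p}\mu(A)$ immediately; your scaling estimate $\lambda^{1/p}\Phi(m)\leq\Phi(\lambda m)$ for $\lambda\geq1$ bypasses that closed form and is in fact more robust, since (as you correctly anticipate) the supremum over $\varepsilon$ need not sit at the endpoint when $\mu(A)$ is small. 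For $(ii)\Rightarrow(iii)$ the paper simply cites Miamee, where you give the Radon--Nikodym argument. For $(iii)\Rightarrow(i)$ the paper applies the $L^{1}$ bound $\left\Vert h\right\Vert_{1,\nu}\leq C_{1}\left\Vert h\right\Vert_{1,\mu}$ to $h(t)=t^{q/p-1}(f^{\ast}(t))^{q-\varepsilon}$ and then takes the supremum over $\varepsilon$; your route avoids this step entirely by proving $(ii)\Rightarrow(\text{norm inequality})$ directly via the rearrangement comparison $f^{\ast}_{\nu}(t)\leq f^{\ast}_{\mu}(t/C)$ and a change of variables. Overall your proof is somewhat longer but more self-contained and, at the characteristic-function step, more careful than the paper's.
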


\begin{proof}
\textit{(i) }$\Longrightarrow $ \textit{(ii): }By Theorem \ref{subspace}, we
have $\mu \approx \nu $ and there is a $C>0$ such that%
\begin{equation}
\left\Vert f\right\Vert _{p,q),\nu }\leq C_{1}\left\Vert f\right\Vert
_{p,q),\mu }  \label{numu}
\end{equation}%
for all $f\in L^{p,q)}\left( X,\mu \right) .$ In particular, if we consider
the function as $\chi _{A}$ in (\ref{indivi1}) and (\ref{numu}), then we have%
\begin{equation*}
\left( q-1\right) \left( \nu \left( A\right) \right) ^{\frac{1}{p}}\leq
C_{1}\left( q-1\right) \left( \mu \left( A\right) \right) ^{\frac{1}{p}}
\end{equation*}%
This implies%
\begin{equation*}
\nu \left( A\right) \leq C\mu \left( A\right)
\end{equation*}%
where $C=C_{1}^{p}>0.$

\textit{(ii) }$\Longrightarrow $ \textit{(iii): This proof is an immediately
result of }\cite[Proposition 1]{Mi}.

\textit{(iii) }$\Longrightarrow $ \textit{(i): }By the assumption, there
exists a $C_{1}>0$ such that%
\begin{equation}
\left\Vert h\right\Vert _{1,\nu }\leq C_{1}\left\Vert h\right\Vert _{1,\mu }
\label{L1}
\end{equation}%
for all $h\in L^{1}(\mu )$. Let $f\in L^{p,q)}\left( X,\mu \right) $ be
given. Therefore, we obtain%
\begin{equation*}
\left( \frac{q}{p}\varepsilon \tint\limits_{0}^{1}t^{\frac{q}{p}-1}\left(
f^{\ast }\left( t\right) \right) ^{q-\varepsilon }d\mu \left( t\right)
\right) ^{\frac{1}{q-\varepsilon }}\leq \left\Vert f\right\Vert _{p,q),\mu
}<\infty
\end{equation*}%
for all $0<\varepsilon <q-1.$ This yields $\chi _{\left( 0,1\right) }t^{%
\frac{q}{p}-1}\left( f^{\ast }\left( t\right) \right) ^{q-\varepsilon }\in
L^{1}\left( \mu \right) $ for all $0<\varepsilon <q-1.$ By (\ref{L1}), we
get $\chi _{\left( 0,1\right) }t^{\frac{q}{p}-1}\left( f^{\ast }\left(
t\right) \right) ^{q-\varepsilon }\in L^{1}\left( \nu \right) $ and%
\begin{equation*}
\tint\limits_{0}^{1}t^{\frac{q}{p}-1}\left( f^{\ast }\left( t\right) \right)
^{q-\varepsilon }d\nu \left( t\right) \leq C_{1}\tint\limits_{0}^{1}t^{\frac{%
q}{p}-1}\left[ f^{\ast }\left( t\right) \right] ^{q-\varepsilon }d\mu \left(
t\right) .
\end{equation*}%
This implies%
\begin{equation}
\left( \frac{q}{p}\varepsilon \tint\limits_{0}^{1}t^{\frac{q}{p}-1}\left(
f^{\ast }\left( t\right) \right) ^{q-\varepsilon }d\nu \left( t\right)
\right) ^{\frac{1}{q-\varepsilon }}\leq C\left( \frac{q}{p}\varepsilon
\tint\limits_{0}^{1}t^{\frac{q}{p}-1}\left( f^{\ast }\left( t\right) \right)
^{q-\varepsilon }d\mu \left( t\right) \right) ^{\frac{1}{q-\varepsilon }}
\label{L2}
\end{equation}%
where $C=C_{1}^{\frac{1}{q-\varepsilon }}.$ If we take the supremum over $%
0<\varepsilon <q-1$ in (\ref{L2}) , then we have%
\begin{equation*}
\left\Vert f\right\Vert _{p,q),\nu }\leq C\left\Vert f\right\Vert _{p,q),\mu
}<\infty
\end{equation*}%
for every $f\in L^{p,q)}\left( X,\mu \right) .$
\end{proof}

\begin{theorem}
Let $1<q\leq p<r\leq s.$ If the inclusion $L^{p,q)}\left( X,\mu \right)
\subseteq L^{r,s)}\left( X,\mu \right) $ holds, then there is a constant $%
M>0 $ such that $\mu \left( A\right) \geq M$ for every $\mu $-non-null set $%
A\in \Sigma .$
\end{theorem}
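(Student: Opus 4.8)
The plan is to argue by contradiction: suppose no such $M>0$ exists, so we can find a sequence of $\mu$-non-null sets $A_n\in\Sigma$ with $\mu(A_n)\to 0$, and use the characteristic functions $\chi_{A_n}$ to violate the norm inequality furnished by the inclusion. First I would invoke Theorem \ref{subspace}: since $L^{p,q)}(X,\mu)\subseteq L^{r,s)}(X,\mu)$ (here the two measures coincide, so $\mu\approx\mu$ trivially), there is a constant $C>0$ with $\left\Vert f\right\Vert_{r,s),\mu}\leq C\left\Vert f\right\Vert_{p,q),\mu}$ for every $f\in L^{p,q)}(X,\mu)$. The strategy is then to compute both sides for $f=\chi_{A_n}$ and extract a contradiction from the different exponents $1/p$ and $1/r$.

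Next I would carry out the rearrangement computation exactly as in display (\ref{indivi1}) of Theorem \ref{equivaltheo}. Using $\chi_{A}^{\ast}(t)=1$ for $0<t<\mu(A)$ and $0$ otherwise, together with $\mu(A)\leq 1$ and the monotonicity of $\varepsilon^{1/(q-\varepsilon)}$ on $(0,q-1)$, one gets
\begin{equation*}
\left\Vert \chi_{A}\right\Vert_{p,q),\mu}=(q-1)\left(\mu(A)\right)^{\frac{1}{p}},
\end{equation*}
and the same computation with $(r,s)$ in place of $(p,q)$ gives
\begin{equation*}
\left\Vert \chi_{A}\right\Vert_{r,s),\mu}=(s-1)\left(\mu(A)\right)^{\frac{1}{r}}.
\end{equation*}
(If $q=\infty$ or $s=\infty$ one replaces the relevant norm by $\sup_{0<t<1}t^{1/p}\chi_A^{\ast}(t)=\left(\mu(A)\right)^{1/p}$ as in (\ref{indivi2}), and the argument is unchanged up to the constant.) Substituting into the inclusion inequality yields
\begin{equation*}
(s-1)\left(\mu(A)\right)^{\frac{1}{r}}\leq C\,(q-1)\left(\mu(A)\right)^{\frac{1}{p}},
\end{equation*}
hence $\left(\mu(A)\right)^{\frac{1}{r}-\frac{1}{p}}\leq C'$ for every $\mu$-non-null $A$, where $C'=C(q-1)/(s-1)$.

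Finally I would observe that the hypothesis $p<r$ forces $\frac{1}{r}-\frac{1}{p}<0$, so $\left(\mu(A)\right)^{\frac{1}{r}-\frac{1}{p}}=\left(\mu(A)\right)^{-\left(\frac{1}{p}-\frac{1}{r}\right)}$ tends to $+\infty$ as $\mu(A)\to 0^{+}$. Thus if $\mu(A_n)\to 0$ along a sequence of non-null sets, the left-hand side is unbounded, contradicting the uniform bound $C'$. Therefore $\inf\{\mu(A):A\in\Sigma,\ \mu(A)>0\}=M>0$, which is exactly the claim. The only delicate point is the legitimacy of applying Theorem \ref{subspace} when the inclusion is stated for individual functions rather than equivalence classes, but since the two measures are identical the two notions coincide (any $\mu$-null set is $\mu$-null), so Theorem \ref{subspace} applies directly; the rest is the routine rearrangement bookkeeping already performed in (\ref{indivi1})--(\ref{indivi2}). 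The main obstacle, such as it is, is simply making sure the exponent comparison is set up in the correct direction — that $p<r$ (not $q<s$) is what drives the blow-up.
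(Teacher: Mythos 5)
Your proposal is correct and follows essentially the same route as the paper: invoke Theorem \ref{subspace} to get $\left\Vert f\right\Vert _{r,s),\mu }\leq C\left\Vert f\right\Vert _{p,q),\mu }$, test with $\chi _{A}$ via the computation in (\ref{indivi1}), and use $p<r$ to force a lower bound on $\mu \left( A\right) $. The only difference is that you phrase it as a contradiction along a sequence $\mu \left( A_{n}\right) \rightarrow 0$, whereas the paper solves the resulting inequality directly for the explicit constant $M=\left( \frac{s-1}{C\left( q-1\right) }\right) ^{\frac{rp}{r-p}}$; these are equivalent.
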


\begin{proof}
Assume that $L^{p,q)}\left( X,\mu \right) \subseteq L^{r,s)}\left( X,\mu
\right) .$ By Theorem \ref{subspace}, there is $C>0$ such that%
\begin{equation}
\left\Vert f\right\Vert _{r,s),\mu }\leq C\left\Vert f\right\Vert _{p,q),\mu
}  \label{nonnull1}
\end{equation}%
for all $f\in L^{p,q)}\left( X,\mu \right) .$ Now, let $A\in \Sigma $ be a $%
\mu $-non-null set. Since $p<r,$ we have $\frac{1}{p}-\frac{1}{r}>0.$ In
particular, if we consider the function as $\chi _{A}$ in (\ref{indivi1})
and (\ref{nonnull1}), then we obtain%
\begin{equation*}
\left( s-1\right) \mu \left( A\right) ^{\frac{1}{r}}\leq C\left( q-1\right)
\mu \left( A\right) ^{\frac{1}{p}}
\end{equation*}%
or equivalently 
\begin{equation*}
\frac{s-1}{C\left( q-1\right) }\leq \mu \left( A\right) ^{\frac{1}{p}-\frac{1%
}{r}}.
\end{equation*}%
If we set $M=\left( \frac{s-1}{C\left( q-1\right) }\right) ^{\frac{rp}{r-p}%
}, $ we get $\mu \left( A\right) \geq M.$
\end{proof}

\section{Inclusion Theorems of $\Lambda _{p),\protect\omega }$}

In \cite{Jain}, the authors unified the concepts of the classical Lorentz
space and the grand Lebesgue space that would result in a new space $\Lambda
_{p),\omega }$ denoted as follows$.$

\begin{definition}
Assume that $1<p<\infty $ and $\omega $ is a weight function. The grand
Lorentz space $\Lambda _{p),\omega }$ is to be space of all measurable
functions $f$ defined on $\left( 0,1\right) $ such that%
\begin{equation*}
\left\Vert f\right\Vert _{\Lambda _{p),\omega }}=\underset{0<\varepsilon <p-1%
}{\sup }\left( \varepsilon \tint\limits_{0}^{1}\left( f^{\ast }\left(
t\right) \right) ^{p-\varepsilon }\omega \left( t\right) dt\right) ^{\frac{1%
}{p-\varepsilon }}<\infty .
\end{equation*}%
The space $\Lambda _{p),\omega }$ is a BF-space when $\omega $ is decreasing
integrable weight. Moreover, the following inclusion holds for all $%
0<\varepsilon <p-1:$%
\begin{equation}
\Lambda _{p,\omega }\hookrightarrow \Lambda _{p),\omega }\hookrightarrow
\Lambda _{p-\varepsilon ,\omega }.  \label{inclusion}
\end{equation}%
Also, if we consider $\omega \left( t\right) =\frac{q}{p}t^{\frac{q}{p}-1},$
then we can say that the space $L^{p,q)}\left( X,\mu \right) $ is a special
case of $\Lambda _{p),\omega },$ see \cite{Jain}.
\end{definition}

It is easy to see that $\Lambda _{p),\vartheta }\hookrightarrow \Lambda
_{p),\omega }$ if $\omega \prec \vartheta .$ Moreover, as a direct result of
the definition of $\omega \sim \vartheta ,$ we obtain $\Lambda
_{p),\vartheta }=\Lambda _{p),\omega }.$ Now, we will present several
advanced inclusion theorems for grand Lorentz spaces.

\begin{theorem}
\label{teokucukbuyuk}Let $1<p\leq q<\infty .$ Then we have $\Lambda
_{p),\omega }\hookrightarrow \Lambda _{q),\omega }$ if and only if%
\begin{equation}
W\left( 1\right) ^{\frac{1}{q-\varepsilon }-\frac{1}{p-\varepsilon }}<\infty
\label{Wholds}
\end{equation}%
holds for $0<\varepsilon <p-1.$
\end{theorem}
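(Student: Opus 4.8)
The plan is to exploit the embedding chain $\Lambda_{p),\omega}\hookrightarrow \Lambda_{p-\varepsilon,\omega}$ from \eqref{inclusion} together with the elementary comparison between the $\Lambda_{p-\varepsilon,\omega}$ (classical Lorentz) norms for different exponents on a finite measure space. The key observation is that since $\omega$ is integrable on $(0,1)$ with $W(1)=\int_0^1\omega(s)\,ds<\infty$, for a fixed $f$ with $f^{\ast}$ bounded (which we may reduce to, since $\Lambda_{p),\omega}$ functions have finite $\Lambda_{p-\varepsilon,\omega}$ norm hence controlled rearrangements) we can compare $\left(\int_0^1 (f^{\ast}(t))^{q-\varepsilon}\omega(t)\,dt\right)^{1/(q-\varepsilon)}$ with $\left(\int_0^1 (f^{\ast}(t))^{p-\varepsilon}\omega(t)\,dt\right)^{1/(p-\varepsilon)}$ via Hölder's inequality with exponent $\frac{p-\varepsilon}{q-\varepsilon}\ge 1$ (valid since $p\le q$), picking up a factor of $W(1)^{\frac{1}{q-\varepsilon}-\frac{1}{p-\varepsilon}}$.

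For the sufficiency direction, I would fix $f\in\Lambda_{p),\omega}$ and $\varepsilon\in(0,q-1)$. Write $(f^{\ast}(t))^{q-\varepsilon}=(f^{\ast}(t))^{q-\varepsilon}$ and apply Hölder's inequality to $\int_0^1 (f^{\ast}(t))^{q-\varepsilon}\omega(t)\,dt = \int_0^1 (f^{\ast}(t))^{q-\varepsilon}\omega(t)^{\frac{q-\varepsilon}{p-\varepsilon}}\cdot\omega(t)^{1-\frac{q-\varepsilon}{p-\varepsilon}}\,dt$ with conjugate exponents $\frac{p-\varepsilon}{q-\varepsilon}$ and its dual, obtaining
\begin{equation*}
\int_0^1 (f^{\ast}(t))^{q-\varepsilon}\omega(t)\,dt \le \left(\int_0^1 (f^{\ast}(t))^{p-\varepsilon}\omega(t)\,dt\right)^{\frac{q-\varepsilon}{p-\varepsilon}} W(1)^{1-\frac{q-\varepsilon}{p-\varepsilon}}.
\end{equation*}
Raising to the power $\frac{1}{q-\varepsilon}$, multiplying by $\varepsilon^{1/(q-\varepsilon)}$, and comparing with the appropriate $\varepsilon'\in(0,p-1)$ term in the $\Lambda_{p),\omega}$ norm (a small technical point: one must relate the parameter range $(0,q-1)$ used for the target norm to $(0,p-1)$ used for the source; since $p\le q$ one has $(0,p-1)\subseteq(0,q-1)$, and for $\varepsilon\in[p-1,q-1)$ one still bounds $f^{\ast}$ using that $f\in\Lambda_{p-\delta,\omega}$ for all small $\delta$ and that $f^{\ast}$ is non-increasing, so $f^{\ast}(t)\le f^{\ast}(t_0)<\infty$ away from $0$, while near $0$ the integrability is controlled by the finite-measure domain) gives $\|f\|_{\Lambda_{q),\omega}}\le C\|f\|_{\Lambda_{p),\omega}}$ with $C$ depending on $\sup_{0<\varepsilon<p-1}W(1)^{\frac{1}{q-\varepsilon}-\frac{1}{p-\varepsilon}}$ and on $q$, $p$.

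For the necessity direction, I would test the embedding on characteristic functions: take $f=\chi_{(0,1)}$, so $f^{\ast}\equiv 1$ on $(0,1)$, which gives $\|f\|_{\Lambda_{p),\omega}}=\sup_{0<\varepsilon<p-1}(\varepsilon W(1))^{1/(p-\varepsilon)}$ and similarly for the $q)$-norm; more flexibly, take $f=\chi_{(0,a)}$ for $a\in(0,1)$ to probe $W(a)$, and then the boundedness of the embedding operator forces $W(1)^{\frac{1}{q-\varepsilon}-\frac{1}{p-\varepsilon}}$ to be finite for the relevant $\varepsilon$, since otherwise the ratio of norms blows up. The main obstacle I anticipate is bookkeeping with the two different $\varepsilon$-supremum ranges and making the Hölder step genuinely uniform in $\varepsilon$ — in particular verifying that the supremum $\sup_{0<\varepsilon<p-1}W(1)^{\frac{1}{q-\varepsilon}-\frac{1}{p-\varepsilon}}$ is finite precisely under condition \eqref{Wholds}; this reduces to checking the sign of the exponent $\frac{1}{q-\varepsilon}-\frac{1}{p-\varepsilon}$ (which is $\le 0$ since $p\le q$) together with the behavior as $\varepsilon\to(p-1)^{-}$, and splitting into the cases $W(1)\le 1$ and $W(1)>1$.
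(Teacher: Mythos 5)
Your necessity argument (testing the embedding on characteristic functions) is essentially the paper's, but your sufficiency direction contains a genuine error: the H\"older step goes the wrong way. You apply H\"older with exponent $\frac{p-\varepsilon}{q-\varepsilon}$, asserting it is $\geq 1$ ``since $p\leq q$''; in fact $p\leq q$ gives $\frac{p-\varepsilon}{q-\varepsilon}\leq 1$, so it is not an admissible H\"older exponent and its formal conjugate $\frac{p-\varepsilon}{p-q}$ is negative. The inequality you write,
\begin{equation*}
\int_0^1 \left( f^{\ast }\left( t\right) \right)^{q-\varepsilon }\omega(t)\,dt \leq \left( \int_0^1 \left( f^{\ast }\left( t\right) \right)^{p-\varepsilon }\omega(t)\,dt\right)^{\frac{q-\varepsilon }{p-\varepsilon }}W\left( 1\right)^{1-\frac{q-\varepsilon }{p-\varepsilon }},
\end{equation*}
bounds a higher-exponent integral by a lower-exponent one over a finite measure, which is false in general: take $\omega \equiv 1$ and $f^{\ast }(t)=t^{-1/(q-\varepsilon )}$, so the left-hand side diverges while the right-hand side is finite. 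The monotonicity of $f^{\ast }$ does not rescue the step. On a finite measure space H\"older only yields $L^{a}\subseteq L^{b}$ for $a\geq b$, i.e.\ an embedding in the direction opposite to the one the theorem asserts, so no choice of exponents in a plain H\"older argument can produce the claimed bound.

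The paper does not attempt a direct H\"older comparison. After reducing to $f\in \Lambda _{p-\varepsilon ,\omega }$ via (\ref{inclusion}), it invokes the Carro--Pick--Soria--Stepanov characterization \cite[Theorem 3.1]{Carro} of the embedding $\Lambda _{p-\varepsilon ,\omega }\hookrightarrow \Lambda _{q-\varepsilon ,\omega }$ between classical Lorentz spaces --- a result whose proof exploits the monotonicity of $f^{\ast }$ through a condition on $W(t)$ for all $t$, not just $t=1$ --- and then works to make the resulting constants uniform in $\varepsilon $ before taking the supremum. Any repair of your argument needs an input of this type (a genuine increasing-exponent embedding theorem for classical Lorentz spaces), not H\"older. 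Your concern about reconciling the two supremum ranges $\left( 0,p-1\right) $ and $\left( 0,q-1\right) $ is legitimate and the sketch there would also need to be firmed up, but it is secondary to the reversed H\"older step.
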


\begin{proof}
The necessary part is trivial by substituting in the inequality $f=\chi _{%
\left[ 0,1\right] }$ where $\chi _{A}\left( x\right) =1$ for $x\in A$ and $%
\chi _{A}\left( x\right) =0$ for $x\notin A.$ Indeed., since the functions $%
\varepsilon ^{\frac{1}{p-\varepsilon }}$ and $\varepsilon ^{\frac{1}{%
q-\varepsilon }}$ are increasing for $0<\varepsilon <p-1<q-1,$ we obtain%
\begin{eqnarray*}
\left( q-1\right) \underset{0<\varepsilon <q-1}{\sup }\left(
\tint\limits_{0}^{1}\omega \left( t\right) dt\right) ^{\frac{1}{%
q-\varepsilon }} &\leq &C\left( p-1\right) \underset{0<\varepsilon <p-1}{%
\sup }\left( \tint\limits_{0}^{1}\omega \left( t\right) dt\right) ^{\frac{1}{%
p-\varepsilon }} \\
&\leq &C\left( p-1\right) \underset{0<\varepsilon <q-1}{\sup }\left(
\tint\limits_{0}^{1}\omega \left( t\right) dt\right) ^{\frac{1}{%
p-\varepsilon }}.
\end{eqnarray*}%
This yields that%
\begin{equation*}
\underset{0<\varepsilon <q-1}{\sup }\left( \tint\limits_{0}^{1}\omega \left(
t\right) dt\right) ^{\frac{1}{q-\varepsilon }-\frac{1}{p-\varepsilon }}\leq
C<\infty .
\end{equation*}%
Now, let $f\in \Lambda _{p),\omega }$ be given. By (\ref{inclusion}), we get 
$f\in \Lambda _{p-\varepsilon ,\omega }.$ Since (\ref{Wholds}) holds, we
have $\Lambda _{p-\varepsilon ,\omega }\hookrightarrow \Lambda
_{q-\varepsilon ,\omega }$ for $0<\varepsilon <p-1$, see \cite[Theorem 3.1]%
{Carro}. This yields that there exists $C\left( \varepsilon \right) >0$ such
that%
\begin{equation}
\left\Vert f\right\Vert _{\Lambda _{q-\varepsilon ,\omega }}\leq C\left(
\varepsilon \right) \left\Vert f\right\Vert _{\Lambda _{p-\varepsilon
,\omega }}  \label{KappaInc}
\end{equation}%
for $f\in \Lambda _{p-\varepsilon ,\omega }$ and $\varepsilon \in \left(
0,p-1\right) $. It is note that identity operator does not exceed $\mu
\left( X\right) +1$, see \cite{Kor}. By (\ref{KappaInc}), we have%
\begin{equation*}
\left( \varepsilon \tint\limits_{0}^{1}\left( f^{\ast }\left( t\right)
\right) ^{q-\varepsilon }\omega (t)dt\right) ^{\frac{p-\varepsilon }{%
q-\varepsilon }}\leq 2^{p-\varepsilon }\varepsilon ^{\frac{p-\varepsilon }{%
q-\varepsilon }}\tint\limits_{0}^{1}\left( f^{\ast }\left( t\right) \right)
^{p-\varepsilon }\omega (t)dt.
\end{equation*}%
This obtain%
\begin{equation*}
\left( \varepsilon \tint\limits_{0}^{1}\left( f^{\ast }\left( t\right)
\right) ^{q-\varepsilon }\omega (t)dt\right) ^{\frac{1}{q-\varepsilon }}\leq
2\varepsilon ^{\frac{1}{q-\varepsilon }}\varepsilon ^{-\frac{1}{%
p-\varepsilon }}\left( \varepsilon \tint\limits_{0}^{1}\left( f^{\ast
}\left( t\right) \right) ^{p-\varepsilon }\omega (t)dt\right) ^{\frac{1}{%
p-\varepsilon }}
\end{equation*}%
or equivalently%
\begin{equation}
\varepsilon ^{\frac{q-p}{\left( p-\varepsilon \right) \left( q-\varepsilon
\right) }}\left( \varepsilon \tint\limits_{0}^{1}\left( f^{\ast }\left(
t\right) \right) ^{q-\varepsilon }\omega (t)dt\right) ^{\frac{1}{%
q-\varepsilon }}\leq 2\left( \varepsilon \tint\limits_{0}^{1}\left( f^{\ast
}\left( t\right) \right) ^{p-\varepsilon }\omega (t)dt\right) ^{\frac{1}{%
p-\varepsilon }}.  \label{KappaInc2}
\end{equation}%
If we take the supremum over $0<\varepsilon <p-1<q-1$ in both sides of (\ref%
{KappaInc2}), then we get%
\begin{equation*}
\left\Vert f\right\Vert _{\Lambda _{q),\omega }}\leq C\left\Vert
f\right\Vert _{\Lambda _{p),\omega }}
\end{equation*}%
where $\underset{0<\varepsilon <q-1}{\sup }\varepsilon ^{\frac{q-p}{\left(
p-\varepsilon \right) \left( q-\varepsilon \right) }}<\infty .$ This
completes the proof.
\end{proof}

By the similar method in Theorem \ref{teokucukbuyuk}, we have

\begin{theorem}
Let $1<p\leq q<\infty .$ Then we have $\Lambda _{p),\vartheta
}\hookrightarrow \Lambda _{q),\omega }$ if and only if%
\begin{equation*}
W\left( 1\right) ^{\frac{1}{q-\varepsilon }}V\left( 1\right) ^{-\frac{1}{%
p-\varepsilon }}<\infty
\end{equation*}%
holds for $0<\varepsilon <p-1.$
\end{theorem}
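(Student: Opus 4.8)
The plan is to run the argument of Theorem~\ref{teokucukbuyuk} again, the only structural change being that the single weight $\omega$ is replaced by the pair $(\vartheta,\omega)$, with $\vartheta$ attached to the smaller space $\Lambda_{p),\vartheta}$ and $\omega$ to the larger space $\Lambda_{q),\omega}$; correspondingly, $W(1)$ in condition (\ref{Wholds}) splits into $W(1)^{1/(q-\varepsilon)}$ coming from the target and $V(1)^{-1/(p-\varepsilon)}$ coming from the source. For the necessity I would test the embedding on $f=\chi_{[0,1]}$, whose rearrangement is the constant $1$ on $(0,1)$; for the sufficiency I would first drop from the grand space down to an ordinary Lorentz space via the $\vartheta$-version of (\ref{inclusion}), then pass between ordinary Lorentz spaces by the two-weight embedding theorem for classical Lorentz spaces (the two-weight analogue of \cite[Theorem 3.1]{Carro}), and finally rebuild the grand $q)$-norm by a supremum over $\varepsilon$.

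\emph{Necessity.} Evaluating both norms at $f=\chi_{[0,1]}$ gives $\|\chi_{[0,1]}\|_{\Lambda_{p),\vartheta}}=\sup_{0<\varepsilon<p-1}(\varepsilon V(1))^{1/(p-\varepsilon)}$ and $\|\chi_{[0,1]}\|_{\Lambda_{q),\omega}}=\sup_{0<\varepsilon<q-1}(\varepsilon W(1))^{1/(q-\varepsilon)}$, because $\int_0^1\vartheta=V(1)$ and $\int_0^1\omega=W(1)$. Feeding these into $\|\cdot\|_{\Lambda_{q),\omega}}\le C\|\cdot\|_{\Lambda_{p),\vartheta}}$ and using, exactly as in Theorem~\ref{teokucukbuyuk}, that $\varepsilon\mapsto\varepsilon^{1/(p-\varepsilon)}$ and $\varepsilon\mapsto\varepsilon^{1/(q-\varepsilon)}$ are increasing on $(0,p-1)\subset(0,q-1)$, one isolates for each $\varepsilon\in(0,p-1)$ the bound $W(1)^{1/(q-\varepsilon)}V(1)^{-1/(p-\varepsilon)}\le C<\infty$.

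\emph{Sufficiency.} Let $f\in\Lambda_{p),\vartheta}$. By the $\vartheta$-version of (\ref{inclusion}), $f\in\Lambda_{p-\varepsilon,\vartheta}$ for each $\varepsilon\in(0,p-1)$. The hypothesis $W(1)^{1/(q-\varepsilon)}V(1)^{-1/(p-\varepsilon)}<\infty$ is precisely the condition under which the classical two-weight embedding $\Lambda_{p-\varepsilon,\vartheta}\hookrightarrow\Lambda_{q-\varepsilon,\omega}$ holds (note $p-\varepsilon\le q-\varepsilon$), so there is $C(\varepsilon)>0$ with $\|f\|_{\Lambda_{q-\varepsilon,\omega}}\le C(\varepsilon)\|f\|_{\Lambda_{p-\varepsilon,\vartheta}}$, the identity operator on $(0,1)$ contributing a factor at most $\mu(X)+1$ (cf.\ \cite{Kor}), as in (\ref{KappaInc}). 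Raising to the power $p-\varepsilon$, inserting the factors of $\varepsilon$ and taking $(q-\varepsilon)$-th roots reproduces, as in (\ref{KappaInc2}),
\begin{equation*}
\varepsilon^{\frac{q-p}{(p-\varepsilon)(q-\varepsilon)}}\left(\varepsilon\int_{0}^{1}\left(f^{\ast}(t)\right)^{q-\varepsilon}\omega(t)\,dt\right)^{\frac{1}{q-\varepsilon}}\le 2\left(\varepsilon\int_{0}^{1}\left(f^{\ast}(t)\right)^{p-\varepsilon}\vartheta(t)\,dt\right)^{\frac{1}{p-\varepsilon}},
\end{equation*}
and taking $\sup_{0<\varepsilon<p-1}$ of both sides, together with $\sup_{0<\varepsilon<q-1}\varepsilon^{(q-p)/((p-\varepsilon)(q-\varepsilon))}<\infty$, yields $\|f\|_{\Lambda_{q),\omega}}\le C\|f\|_{\Lambda_{p),\vartheta}}$.

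The main obstacle is the one already present in Theorem~\ref{teokucukbuyuk}, only slightly worsened by having two weights: one must keep $C(\varepsilon)\,\varepsilon^{(q-p)/((p-\varepsilon)(q-\varepsilon))}$ uniformly bounded as $\varepsilon\to 0^{+}$ and as $\varepsilon\to(p-1)^{-}$. Near $0$ this works because $\tfrac{1}{p-\varepsilon}-\tfrac{q-p}{(p-\varepsilon)(q-\varepsilon)}=\tfrac{1}{q-\varepsilon}>0$, which absorbs the $\varepsilon^{-(q-p)/((p-\varepsilon)(q-\varepsilon))}$ blow-up; near $p-1$ the exponent stays bounded. A second delicate point is that the supremum defining $\|\cdot\|_{\Lambda_{q),\omega}}$ runs over $\varepsilon\in(0,q-1)$, whereas the reduction above only controls $\varepsilon\in(0,p-1)$; the remaining values, for which $q-\varepsilon$ is close to $1$, should be treated separately and more cheaply by embedding $f\in\Lambda_{a,\vartheta}$ with $a\in(1,p)$ directly into $\Lambda_{q-\varepsilon,\omega}$ via the one-parameter classical Lorentz embedding, which is again supplied by the stated hypothesis. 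Lastly, one should check that on $(0,1)$ the two-weight embedding constant is genuinely controlled by the endpoint value $W(1)^{1/(q-\varepsilon)}V(1)^{-1/(p-\varepsilon)}$ rather than by a supremum over all $t$ — this is where any monotonicity assumption on the weights enters.
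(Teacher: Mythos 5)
Your proposal is essentially identical to what the paper intends: the paper gives no separate proof for this statement, stating only that it follows ``by the similar method in Theorem \ref{teokucukbuyuk},'' and your argument is precisely that adaptation (necessity via $f=\chi_{[0,1]}$, sufficiency via the drop to $\Lambda_{p-\varepsilon,\vartheta}$, the two-weight classical Lorentz embedding of Carro--Pick--Soria--Stepanov, and reassembly of the supremum). The delicate points you flag at the end — uniformity of $C(\varepsilon)$ and the mismatch between the supremum ranges $(0,p-1)$ and $(0,q-1)$ — are real, but they are equally present and equally unaddressed in the paper's own proof of Theorem \ref{teokucukbuyuk}.
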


\begin{theorem}
Let $1<q<p<\infty $ and let $r$ be given by $\frac{1}{r}=\frac{1}{q}-\frac{1%
}{p}.$ Then we have $\Lambda _{p),\vartheta }\hookrightarrow \Lambda
_{q),\omega }$ if%
\begin{eqnarray}
&&\left( \tint\limits_{0}^{\infty }\left( \frac{W\left( t\right) }{V\left(
t\right) }\right) ^{\frac{r-\varepsilon }{p-\varepsilon }}\omega \left(
t\right) dt\right) ^{\frac{1}{r-\varepsilon }}  \notag \\
&=&\left( \frac{q-\varepsilon }{r-\varepsilon }\frac{W^{\frac{r-\varepsilon 
}{q-\varepsilon }}\left( \infty \right) }{V^{\frac{r-\varepsilon }{%
p-\varepsilon }}\left( \infty \right) }+\frac{q-\varepsilon }{p-\varepsilon }%
\tint\limits_{0}^{\infty }\left( \frac{W\left( t\right) }{V\left( t\right) }%
\right) ^{\frac{r-\varepsilon }{q-\varepsilon }}\vartheta \left( t\right)
dt\right) ^{\frac{1}{r-\varepsilon }}<\infty  \label{buyukkucuk}
\end{eqnarray}%
holds for $0<\varepsilon <q-1.$
\end{theorem}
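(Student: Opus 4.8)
The plan is to reduce the ``grand'' inclusion $\Lambda_{p),\vartheta}\hookrightarrow\Lambda_{q),\omega}$ to a family of classical Lorentz-space inclusions $\Lambda_{p-\varepsilon,\vartheta}\hookrightarrow\Lambda_{q-\varepsilon,\omega}$ indexed by $\varepsilon\in(0,q-1)$, exactly as in the proof of Theorem \ref{teokucukbuyuk}. First I would take $f\in\Lambda_{p),\vartheta}$; by the embedding \eqref{inclusion} we have $f\in\Lambda_{p-\varepsilon,\vartheta}$ for every $\varepsilon\in(0,p-1)$, and a fortiori for every $\varepsilon\in(0,q-1)$ since $q<p$. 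Now for the classical spaces with indices $q-\varepsilon<p-\varepsilon$, the relevant sufficient condition for $\Lambda_{p-\varepsilon,\vartheta}\hookrightarrow\Lambda_{q-\varepsilon,\omega}$ is precisely the Sawyer-type (Carro--Soria) condition for the ``decreasing'' case: writing $r_\varepsilon$ for the exponent determined by $\frac{1}{r_\varepsilon}=\frac{1}{q-\varepsilon}-\frac{1}{p-\varepsilon}$, the inclusion holds whenever the quantity displayed in \eqref{buyukkucuk} (which is the $L^{r_\varepsilon}$-norm of $W/V$ against the appropriate weight, equivalently its integrated-by-parts form) is finite. I would cite \cite{Carro} (Theorem 3.1 there, the same source used in Theorem \ref{teokucukbuyuk}, extended to the case $q<p$) for this equivalence, so that \eqref{buyukkucuk} gives a constant $C(\varepsilon)>0$ with $\|f\|_{\Lambda_{q-\varepsilon,\omega}}\le C(\varepsilon)\|f\|_{\Lambda_{p-\varepsilon,\vartheta}}$.

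Next I would unwind this into the grand norms, tracking the powers of $\varepsilon$. From $\|f\|_{\Lambda_{q-\varepsilon,\omega}}\le C(\varepsilon)\|f\|_{\Lambda_{p-\varepsilon,\vartheta}}$, multiplying and dividing by $\varepsilon$ raised to the correct fractional powers, one gets
\begin{equation*}
\varepsilon^{\frac{1}{q-\varepsilon}-\frac{1}{p-\varepsilon}}\Bigl(\varepsilon\tint\limits_{0}^{1}\bigl(f^{\ast}(t)\bigr)^{q-\varepsilon}\omega(t)\,dt\Bigr)^{\frac{1}{q-\varepsilon}}\le C(\varepsilon)\Bigl(\varepsilon\tint\limits_{0}^{1}\bigl(f^{\ast}(t)\bigr)^{p-\varepsilon}\vartheta(t)\,dt\Bigr)^{\frac{1}{p-\varepsilon}},
\end{equation*}
which is the analogue of \eqref{KappaInc2}. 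Since $q<p$, the exponent $\frac{1}{q-\varepsilon}-\frac{1}{p-\varepsilon}=\frac{p-q}{(p-\varepsilon)(q-\varepsilon)}$ is positive and $\varepsilon^{\frac{p-q}{(p-\varepsilon)(q-\varepsilon)}}\to 0$ as $\varepsilon\to 0^{+}$, so one must check that $C(\varepsilon)$ (which comes from the norm of the identity/inclusion operator, bounded by $\mu(X)+1=2$ as in Theorem \ref{teokucukbuyuk}, see \cite{Kor}) together with the factor $\varepsilon^{\frac{1}{q-\varepsilon}}$ stays controlled; then dividing by $\varepsilon^{\frac{1}{q-\varepsilon}-\frac{1}{p-\varepsilon}}$ and taking the supremum over $0<\varepsilon<q-1$ yields $\|f\|_{\Lambda_{q),\omega}}\le C\|f\|_{\Lambda_{p),\vartheta}}$ with $C=2\sup_{0<\varepsilon<q-1}\varepsilon^{-(\frac{1}{q-\varepsilon}-\frac{1}{p-\varepsilon})}<\infty$ (finite because the exponent is bounded and $\varepsilon$ ranges over a bounded interval).

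The one genuinely delicate point is verifying that hypothesis \eqref{buyukkucuk} really does imply the classical inclusion $\Lambda_{p-\varepsilon,\vartheta}\hookrightarrow\Lambda_{q-\varepsilon,\omega}$ uniformly enough in $\varepsilon$: the Carro--Soria/Sawyer characterization in the regime ``second index strictly smaller than first index'' involves the weight $W/V$ raised to powers $\frac{r_\varepsilon-\varepsilon}{p-\varepsilon}$ and $\frac{r_\varepsilon-\varepsilon}{q-\varepsilon}$ and an integration-by-parts identity relating $\int(W/V)^{a}\omega$ to a boundary term plus $\int(W/V)^{b}\vartheta$, which is exactly the two-sided display in \eqref{buyukkucuk}. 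I would therefore spend the main effort confirming that \eqref{buyukkucuk} is the correct transcription of that condition with $r$ replaced by $r-\varepsilon$ (note $\frac{1}{r-\varepsilon}\ne\frac{1}{q-\varepsilon}-\frac{1}{p-\varepsilon}$ in general, so one should be careful whether the paper intends $r_\varepsilon$ with $\frac1{r_\varepsilon}=\frac1{q-\varepsilon}-\frac1{p-\varepsilon}$ or simply $r-\varepsilon$ with $r$ fixed by $\frac1r=\frac1q-\frac1p$), and that the resulting constants are uniformly bounded. Once that is pinned down, the passage from the classical to the grand estimate is routine bookkeeping of the kind already carried out in Theorem \ref{teokucukbuyuk}, and taking the supremum over $\varepsilon$ completes the proof.
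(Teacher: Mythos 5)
Your overall strategy is the same as the paper's: reduce to the classical embeddings $\Lambda _{p-\varepsilon ,\vartheta }\hookrightarrow \Lambda _{q-\varepsilon ,\omega }$ via \cite[Theorem 3.1]{Carro} and then pass to the grand norms by tracking powers of $\varepsilon $. However, your final bookkeeping step contains a genuine error. Starting from $\left\Vert f\right\Vert _{\Lambda _{q-\varepsilon ,\omega }}\leq C\left( \varepsilon \right) \left\Vert f\right\Vert _{\Lambda _{p-\varepsilon ,\vartheta }}$ and multiplying both sides by $\varepsilon ^{1/\left( q-\varepsilon \right) }$, the factor $\varepsilon ^{a\left( \varepsilon \right) }$ with $a\left( \varepsilon \right) =\frac{1}{q-\varepsilon }-\frac{1}{p-\varepsilon }=\frac{p-q}{\left( p-\varepsilon \right) \left( q-\varepsilon \right) }>0$ lands on the \emph{right-hand} side, multiplying $\bigl( \varepsilon \tint\nolimits_{0}^{1}\left( f^{\ast }\left( t\right) \right) ^{p-\varepsilon }\vartheta \left( t\right) dt\bigr) ^{1/\left( p-\varepsilon \right) }$; since $a\left( \varepsilon \right) $ is positive and bounded while $\varepsilon $ ranges over the bounded interval $\left( 0,q-1\right) $, $\sup_{\varepsilon }\varepsilon ^{a\left( \varepsilon \right) }<\infty $ and one can take suprema directly. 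This is exactly the arrangement in (\ref{bykkck2}). You instead placed $\varepsilon ^{a\left( \varepsilon \right) }$ on the left and then divided by it, producing the constant $2\sup_{0<\varepsilon <q-1}\varepsilon ^{-a\left( \varepsilon \right) }$, which you assert is finite ``because the exponent is bounded and $\varepsilon $ ranges over a bounded interval.'' That assertion is false: since $a\left( \varepsilon \right) \rightarrow \frac{p-q}{pq}>0$ as $\varepsilon \rightarrow 0^{+}$, we have $\varepsilon ^{-a\left( \varepsilon \right) }\geq \varepsilon ^{-\left( p-q\right) /\left( pq\right) }\rightarrow +\infty $, so your constant is infinite and the argument does not close. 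The repair is simply to keep the positive power of $\varepsilon $ attached to the $\Lambda _{p-\varepsilon ,\vartheta }$-norm, as the paper does.

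On the points you flag as delicate: you are right to question whether (\ref{buyukkucuk}), written with the fixed $r$ shifted to $r-\varepsilon $ rather than with $r_{\varepsilon }$ defined by $\frac{1}{r_{\varepsilon }}=\frac{1}{q-\varepsilon }-\frac{1}{p-\varepsilon }$, is the correct transcription of the Carro--Pick--Soria--Stepanov condition for each $\varepsilon $, and whether the resulting constants $C\left( \varepsilon \right) $ are uniform. The paper does not resolve either issue; it invokes \cite[Theorem 3.1]{Carro} for each $\varepsilon $ and bounds $C\left( \varepsilon \right) $ by $2$ with a reference to \cite{Kor}. Your caution there exceeds what the paper supplies, but it does not by itself repair the divergent constant described above.
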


\begin{proof}
Let $f\in \Lambda _{p),\vartheta }$ be given. By (\ref{inclusion}), we get $%
f\in \Lambda _{p-\varepsilon ,\vartheta }.$ Moreover, we assume that (\ref%
{buyukkucuk}) holds. Thus, we have $\Lambda _{p-\varepsilon ,\vartheta
}\hookrightarrow \Lambda _{q-\varepsilon ,\omega }$ for $0<\varepsilon <q-1$%
, see \cite[Theorem 3.1]{Carro}. This yields that there exists $C\left(
\varepsilon \right) >0$ such that%
\begin{equation}
\left\Vert f\right\Vert _{\Lambda _{q-\varepsilon ,\omega }}\leq C\left(
\varepsilon \right) \left\Vert f\right\Vert _{\Lambda _{p-\varepsilon
,\vartheta }}  \label{bykkck1}
\end{equation}%
for $f\in \Lambda _{p-\varepsilon ,\vartheta }$ and $\varepsilon \in \left(
0,p-1\right) $. It is note that identity operator does not exceed $\mu
\left( X\right) +1$, see \cite{Kor}. By (\ref{bykkck1}), we have%
\begin{equation*}
\left( \varepsilon \tint\limits_{0}^{1}\left( f^{\ast }\left( t\right)
\right) ^{q-\varepsilon }\omega (t)dt\right) ^{\frac{p-\varepsilon }{%
q-\varepsilon }}\leq 2^{p-\varepsilon }\varepsilon ^{\frac{p-\varepsilon }{%
q-\varepsilon }}\tint\limits_{0}^{1}\left( f^{\ast }\left( t\right) \right)
^{p-\varepsilon }\vartheta (t)dt.
\end{equation*}%
This obtain%
\begin{equation}
\left( \varepsilon \tint\limits_{0}^{1}\left( f^{\ast }\left( t\right)
\right) ^{q-\varepsilon }\omega (t)dt\right) ^{\frac{1}{q-\varepsilon }}\leq
2\varepsilon ^{\frac{p-q}{\left( p-\varepsilon \right) \left( q-\varepsilon
\right) }}\left( \varepsilon \tint\limits_{0}^{1}\left( f^{\ast }\left(
t\right) \right) ^{p-\varepsilon }\vartheta (t)dt\right) ^{\frac{1}{%
p-\varepsilon }}.  \label{bykkck2}
\end{equation}%
If we take the supremum over $0<\varepsilon <q-1<p-1$ in both sides of (\ref%
{bykkck2}), then we get%
\begin{equation*}
\left\Vert f\right\Vert _{\Lambda _{q),\omega }}\leq C\left\Vert
f\right\Vert _{\Lambda _{p),\vartheta }}
\end{equation*}%
where $\underset{0<\varepsilon <q-1}{\sup }\varepsilon ^{\frac{p-q}{\left(
p-\varepsilon \right) \left( q-\varepsilon \right) }}<\infty .$ This
completes the proof.
\end{proof}

\begin{corollary}
Since the space $L^{p,q)}\left( X,\mu \right) $ is a special case of $%
\Lambda _{p),\omega },$ the following inclusion follows (\ref{Wholds}) that%
\begin{equation*}
L^{p,q_{1})}\subset L^{p)}\left( \mu \right) \subset L^{p,s_{1})}
\end{equation*}%
for $q_{1}\leq p\leq s_{1}.$ In similar way, we have%
\begin{equation*}
L^{p,q_{2})}\subset L^{p)}\left( \mu \right) \subset L^{p,s_{2})}
\end{equation*}%
under the condition (\ref{buyukkucuk}) for $s_{2}\leq p\leq q_{2}.$
\end{corollary}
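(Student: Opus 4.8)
The plan is to read each of the four inclusions off the $\Lambda_{p),\omega}$-inclusion theorems of this section, once one records which weight represents which space. Recall that $L^{p,m)}(X,\mu)$ coincides with $\Lambda_{m),\omega_m}$ for the power weight $\omega_m(t)=\tfrac{m}{p}t^{\frac{m}{p}-1}$, $1<m<\infty$, while the grand Lebesgue space $L^{p)}(\mu)$ is exactly $L^{p,p)}(X,\mu)$, i.e.\ $\Lambda_{p),\mathbf{1}}$ with the constant weight $\mathbf{1}\equiv 1$. The primitives are immediate: $W_m(t)=\int_0^t\omega_m(s)\,ds=t^{m/p}$, so $W_m(1)=1$ for every such $m$ (and, extending weights by zero off $(0,1)$, $W_m(\infty)=1$), while the constant weight has primitive $t$ and value $1$ at $t=1$. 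These normalizations are precisely what makes the two chains work.

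For the chain $q_1\le p\le s_1$ the grand index increases along each inclusion, so the tool is the two-weight theorem that follows Theorem \ref{teokucukbuyuk}, whose hypothesis is $W(1)^{\frac{1}{q-\varepsilon}}V(1)^{-\frac{1}{p-\varepsilon}}<\infty$ (the two-weight analogue of (\ref{Wholds})). Applied with source $L^{p,q_1)}$ (so the source primitive satisfies $V(1)=1$) and target $L^{p)}(\mu)$ (so $W(1)=1$), the hypothesis reduces to $1^{\frac{1}{p-\varepsilon}}\cdot 1^{-\frac{1}{q_1-\varepsilon}}=1<\infty$, giving $L^{p,q_1)}\hookrightarrow L^{p)}(\mu)$; applied once more with source $L^{p)}(\mu)$ and target $L^{p,s_1)}$ it gives $L^{p)}(\mu)\hookrightarrow L^{p,s_1)}$ for the same reason. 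Concatenating the two embeddings proves the first assertion.

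For the chain $s_2\le p\le q_2$ the grand index decreases, so I would invoke the last theorem of the section, whose sufficient condition is (\ref{buyukkucuk}) with $\tfrac1r=\tfrac1q-\tfrac1p$, and instantiate it twice. For $L^{p,q_2)}\hookrightarrow L^{p)}(\mu)$ take the larger grand index to be $q_2$ and the smaller to be $p$, with source weight $\omega_{q_2}$ (so $V(t)=t^{q_2/p}$) and target weight $\mathbf{1}$ (so $W(t)=t$ on $(0,1)$); then $W(t)/V(t)=t^{1-q_2/p}$ and $W(\infty)=V(\infty)=1$, and one substitutes these into (\ref{buyukkucuk}) to see that its right-hand side is finite for every admissible $\varepsilon$. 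For $L^{p)}(\mu)\hookrightarrow L^{p,s_2)}$ one carries out the symmetric computation with the roles of the power weight and the constant weight exchanged. Concatenation then gives the second chain.

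The only step that is not purely formal is this verification of (\ref{buyukkucuk}). In contrast with the first chain, where the hypothesis is the $\varepsilon$-free value $W(1)=1$, condition (\ref{buyukkucuk}) is an integral condition over $(0,\infty)$ whose exponents all carry the shift $\varepsilon$, so one must evaluate integrals of the shape $\int_0^1\bigl(t^{1-q_2/p}\bigr)^{\gamma(\varepsilon)}\,dt$, together with the companion term built from $\vartheta$ on the right of (\ref{buyukkucuk}), and confirm that the resulting power of $t$ stays strictly above $-1$ throughout the admissible range of $\varepsilon$; this hinges on the exact value of the dual exponent occurring in (\ref{buyukkucuk}), which is where care is needed. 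One finds the $t$-exponent equals $-1+\varepsilon/p$, which exceeds $-1$ exactly because $\varepsilon>0$, so each such integral is finite; the constant it produces grows as $\varepsilon\to 0$, but that is harmless because the final supremum absorbs it through a bounded power of $\varepsilon$, just as in the proof of Theorem \ref{teokucukbuyuk}. Everything else is the composition of two embeddings and the identification $L^{p)}(\mu)=L^{p,p)}(X,\mu)$.
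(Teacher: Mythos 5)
Your treatment of the first chain is correct and is exactly what the paper intends (the corollary is stated without proof, as an immediate consequence of the identifications): with $L^{p,m)}=\Lambda _{m),\omega _{m}}$, $\omega _{m}(t)=\frac{m}{p}t^{\frac{m}{p}-1}$, one has $W_{m}(1)=1$, so the two-weight condition $W(1)^{\frac{1}{q-\varepsilon }}V(1)^{-\frac{1}{p-\varepsilon }}=1<\infty $ holds trivially and both embeddings $L^{p,q_{1})}\hookrightarrow L^{p)}\hookrightarrow L^{p,s_{1})}$ follow by composing the two-weight version of Theorem \ref{teokucukbuyuk}. Your identification $L^{p)}(\mu )=L^{p,p)}=\Lambda _{p),\mathbf{1}}$ is also right.

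The gap is in your verification of (\ref{buyukkucuk}) for the second chain. Take $L^{p,q_{2})}\hookrightarrow L^{p)}$, i.e. source index $q_{2}$, target index $p$, $V(t)=t^{q_{2}/p}$, $W(t)=t$, $\omega \equiv 1$, and $r=\frac{pq_{2}}{q_{2}-p}$. The first integral in (\ref{buyukkucuk}) is $\int_{0}^{1}t^{\alpha (\varepsilon )}dt$ with
\begin{equation*}
\alpha (\varepsilon )=\Bigl( 1-\tfrac{q_{2}}{p}\Bigr) \frac{r-\varepsilon }{q_{2}-\varepsilon }=-1-\frac{\varepsilon \bigl( 2-\tfrac{q_{2}}{p}\bigr) }{q_{2}-\varepsilon },
\end{equation*}
using $\bigl( 1-\tfrac{q_{2}}{p}\bigr) r=-q_{2}$. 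This is not your claimed $-1+\varepsilon /p$: for $p<q_{2}\leq 2p$ one gets $\alpha (\varepsilon )\leq -1$ for every admissible $\varepsilon $, so the integral diverges and (\ref{buyukkucuk}) fails; the symmetric computation for $L^{p)}\hookrightarrow L^{p,s_{2})}$ fails likewise when $p/2<s_{2}<p$. This is not a repairable slip in the exponent bookkeeping: classically $L^{p,q}\subset L^{p,s}$ holds precisely for $q\leq s$, so the second chain runs against the monotonicity in the second index, and no sufficient condition of Carro--Pick--Soria--Stepanov type can be expected to hold for these power weights. Either the second chain is read as purely conditional on (\ref{buyukkucuk}) --- in which case your attempted verification should be deleted rather than asserted --- or it simply does not hold for the natural weights. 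The first chain stands as you argued it.
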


\section{Approximate Identities in $\Lambda _{p),\protect\omega }$}

Let $\Omega \subset 
%TCIMACRO{\U{211d} }%
%BeginExpansion
\mathbb{R}
%EndExpansion
^{d}$ be bounded and open set. It is well known that the classical Lebesgue
space $L^{p}(\Omega )$ has a bounded approximate identity in $L^{1}(\Omega )$%
. Gurkanli considered $L^{p),\theta }\left( \Omega \right) $ does not admit
a bounded approximate identity in $L^{1}(\Omega )$ in \cite[Theorem 4]{Gur},
and also $\left[ L^{p}\left( \Omega \right) \right] _{p),\theta }$, the
closure of $C_{0}^{\infty }(\Omega )$ in $L^{p),\theta }\left( \Omega
\right) $, admits a bounded approximate identity in $L^{1}(\Omega )$ in \cite%
[Theorem 6]{Gur}. It is known that when $\theta =1$ the space $%
L^{p),1}\left( \Omega \right) $ reduces to the grand Lebesgue space $%
L^{p)}\left( \Omega \right) .$

For $x\in X$ and $r>0,$ we denote an open ball with center $x$ and radius $r$
by $B(x,r)$. For $f\in L_{loc}^{1}\left( \Omega \right) ,$ we denote the
(centered) Hardy-Littlewood maximal operator $Mf$\ of $f$ by%
\begin{equation*}
Mf(x)=\underset{r>0}{\sup }\frac{1}{\left\vert B(x,r)\right\vert }%
\int\limits_{B(x,r)}\left\vert f(y)\right\vert dy,
\end{equation*}%
where the supremum is taken over all balls $B(x,r).$ In \cite{Jain}, the
authors proved that the Hardy-Littlewood maximal operator is bounded in $%
\Lambda _{p),\vartheta }$.

\begin{definition}
Assume that $\varphi $ is an integrable function defined on $%
%TCIMACRO{\U{211d} }%
%BeginExpansion
\mathbb{R}
%EndExpansion
^{d}$ such that $\int\limits_{%
%TCIMACRO{\U{211d} }%
%BeginExpansion
\mathbb{R}
%EndExpansion
^{d}}\varphi (x)dx=1.$ For each $t>0,$ define the function $\varphi
_{t}\left( x\right) =t^{-d}\varphi \left( \frac{x}{t}\right) .$ The sequence 
$\left\{ \varphi _{t}\right\} $ is referred to as an approximate identity.
It is known that for $1<p<\infty ,$ the sequence $\left\{ \varphi _{t}\ast
f\right\} $ converges to $f$ in $L^{p}\left( \Omega \right) $, i.e.%
\begin{equation*}
\lim_{t\longrightarrow \infty }\left\Vert \varphi _{t}\ast f-f\right\Vert
_{p,\Omega }=0
\end{equation*}%
,see \cite{St}. If we impose additional conditions on $\varphi $, then the
entire sequence converges almost everywhere to $f.$ Define the radial
majorant of $\varphi $ to be the function%
\begin{equation*}
\widetilde{\varphi }(x)=\underset{\left\vert y\right\vert \geq \left\vert
x\right\vert }{\sup }\left\vert \varphi (y)\right\vert .
\end{equation*}%
If the function $\widetilde{\varphi }$ is integrable, then $\left\{ \varphi
_{t}\right\} $ is called a potential-type approximate identity, see \cite{Cu}%
.
\end{definition}

\begin{theorem}
\label{supremummaximal}(see \cite{Duo})Let $f\in L_{loc}^{1}\left( X\right) $
be given. Then we have%
\begin{equation*}
\underset{t>0}{\sup }\left\vert \varphi _{t}\ast f\left( x\right)
\right\vert \leq Mf\left( x\right) .
\end{equation*}
\end{theorem}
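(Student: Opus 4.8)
The plan is to carry out the classical layer-cake argument, after reducing to a radially decreasing kernel. First I would invoke the potential-type hypothesis from the preceding definition: the radial majorant $\widetilde{\varphi}$ is a nonnegative, radial, nonincreasing function lying in $L^{1}(\mathbb{R}^{d})$, and by construction $|\varphi(x)|\leq\widetilde{\varphi}(x)$, hence $|\varphi_{t}\ast f(x)|\leq(\widetilde{\varphi})_{t}\ast|f|(x)$ for every $t>0$, where $(\widetilde{\varphi})_{t}(x)=t^{-d}\widetilde{\varphi}(x/t)$. So it suffices to bound $\psi_{t}\ast|f|(x)$ uniformly in $t$ for an arbitrary nonnegative, radial, nonincreasing $\psi\in L^{1}(\mathbb{R}^{d})$; the constant produced will be $\|\widetilde{\varphi}\|_{1}$, which is $1$ under the normalization adopted for $\varphi$.

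Next, writing $\psi(x)=g(|x|)$ with $g:[0,\infty)\to[0,\infty)$ nonincreasing and $g(\infty)=0$ (forced by $\psi\in L^{1}$), I would use the representation $g(r)=\int_{(0,\infty)}\chi_{[0,s)}(r)\,d\nu(s)$, where $\nu$ is the nonnegative Borel measure on $(0,\infty)$ with $\nu((r,\infty))=g(r)$, i.e. $\nu=-dg$. Plugging this into the convolution and applying Tonelli's theorem gives
\[
\psi\ast|f|(x)=\int_{(0,\infty)}\left(\int_{B(x,s)}|f(y)|\,dy\right)d\nu(s)\leq Mf(x)\int_{(0,\infty)}|B(x,s)|\,d\nu(s),
\]
since each average $|B(x,s)|^{-1}\int_{B(x,s)}|f|$ is dominated by $Mf(x)$ by definition of the Hardy--Littlewood maximal operator. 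Finally I would compute $\int_{(0,\infty)}|B(x,s)|\,d\nu(s)=v_{d}\int_{(0,\infty)}s^{d}\,d\nu(s)=v_{d}\int_{0}^{\infty}d\,r^{d-1}g(r)\,dr=\int_{\mathbb{R}^{d}}\psi(y)\,dy=\|\psi\|_{1}$, where $v_{d}=|B(0,1)|$ and the middle equality is one more Tonelli interchange (equivalently integration by parts, the boundary term $v_{d}r^{d}g(r)$ vanishing at $0$ and at $\infty$). This yields $\psi\ast|f|(x)\leq\|\psi\|_{1}\,Mf(x)$.

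To conclude, I would observe that $(\widetilde{\varphi})_{t}$ is again nonnegative, radial, nonincreasing, with $\|(\widetilde{\varphi})_{t}\|_{1}=\|\widetilde{\varphi}\|_{1}$ for every $t>0$, so the bound just obtained applies to it uniformly in $t$; combining with $|\varphi_{t}\ast f(x)|\leq(\widetilde{\varphi})_{t}\ast|f|(x)$ and taking the supremum over $t>0$ gives $\sup_{t>0}|\varphi_{t}\ast f(x)|\leq\|\widetilde{\varphi}\|_{1}Mf(x)$, which is the asserted inequality in the normalized setting. The main technical point to be careful about is the layer-cake representation of the merely nonincreasing $g$ together with the two Tonelli interchanges: one should note that $g$ has at most countably many jumps and that altering $g$ there changes neither $\psi$ a.e. nor the convolution, and that the finiteness required for Tonelli is exactly $\psi\in L^{1}$. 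The reduction to radially decreasing kernels via $\widetilde{\varphi}$ and the dilation invariance of the $L^{1}$-norm are routine.
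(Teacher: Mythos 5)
Your argument is correct and complete; it is precisely the standard radial-majorant/layer-cake proof of this estimate, which the paper itself does not prove but simply cites from Duoandikoetxea, so there is no in-paper proof to compare against. The two Tonelli interchanges and the reduction to a nonnegative, radial, nonincreasing kernel are all handled properly, and you are right that the potential-type hypothesis (integrability of $\widetilde{\varphi}$) from the preceding definition is what makes the argument run. The one point worth flagging is the constant: your proof yields $\sup_{t>0}\left\vert \varphi _{t}\ast f\left( x\right) \right\vert \leq \left\Vert \widetilde{\varphi }\right\Vert _{1}Mf\left( x\right)$, and the normalization $\int \varphi =1$ does \emph{not} force $\left\Vert \widetilde{\varphi }\right\Vert _{1}=1$ (one always has $\left\Vert \widetilde{\varphi }\right\Vert _{1}\geq \left\Vert \varphi \right\Vert _{1}\geq 1$, with equality only when $\varphi$ is itself nonnegative, radial and nonincreasing), so the inequality as literally printed in the theorem, with constant $1$, is not what you prove and is not true in general; the correct statement carries the factor $\left\Vert \widetilde{\varphi }\right\Vert _{1}$, exactly as in the cited reference. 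This is a defect of the theorem's phrasing rather than of your proof, but you should not assert that the constant reduces to $1$ under the stated normalization.
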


\begin{theorem}
\label{Lorentzgirisim}If $f\in \Lambda _{p,\omega },$ then we have $\varphi
_{t}\ast f\longrightarrow f$ in $\Lambda _{p,\omega }$ as $t>0.$
\end{theorem}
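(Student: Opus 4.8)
The goal is to show that for $f \in \Lambda_{p,\omega}$ with $\omega$ a decreasing integrable weight, the mollified functions $\varphi_t * f$ converge to $f$ in the $\Lambda_{p,\omega}$-norm. First I would reduce the problem to a density argument: the space $C_0^\infty(X)$ (or the bounded compactly supported functions) is dense in $\Lambda_{p,\omega}$, so given $\eta > 0$ pick $g$ with $\|f - g\|_{\Lambda_{p,\omega}} < \eta$. Then I would split
\begin{equation*}
\|\varphi_t * f - f\|_{\Lambda_{p,\omega}} \leq \|\varphi_t * (f-g)\|_{\Lambda_{p,\omega}} + \|\varphi_t * g - g\|_{\Lambda_{p,\omega}} + \|g - f\|_{\Lambda_{p,\omega}}.
\end{equation*}
The first and third terms need a uniform bound on the convolution operator $f \mapsto \varphi_t * f$ on $\Lambda_{p,\omega}$; the middle term should go to $0$ as $t \to \infty$ because $g$ is smooth and compactly supported so $\varphi_t * g \to g$ uniformly on the (fixed, bounded) set $X = (0,1)$, and uniform convergence on a set of finite measure controls the rearrangement and hence the $\Lambda_{p,\omega}$-norm since $\omega$ is integrable.

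\textbf{Getting the uniform operator bound.} The key step is to bound $\|\varphi_t * h\|_{\Lambda_{p,\omega}}$ by $C\|h\|_{\Lambda_{p,\omega}}$ with $C$ independent of $t$. Here I would invoke Theorem~\ref{supremummaximal}: since $\sup_{t>0}|\varphi_t * h(x)| \leq Mh(x)$ pointwise, we have $(\varphi_t * h)^*(s) \leq (Mh)^*(s)$ for every $s > 0$, hence
\begin{equation*}
\|\varphi_t * h\|_{\Lambda_{p,\omega}} = \left(\int_0^1 \left((\varphi_t * h)^*(t)\right)^p \omega(t)\, dt\right)^{1/p} \leq \left(\int_0^1 \left((Mh)^*(t)\right)^p \omega(t)\, dt\right)^{1/p} = \|Mh\|_{\Lambda_{p,\omega}}.
\end{equation*}
Then the boundedness of the Hardy--Littlewood maximal operator on $\Lambda_{p,\vartheta}$-type spaces — which is the result of \cite{Jain} recalled just before the theorem, applied with $\vartheta = \omega$ — gives $\|Mh\|_{\Lambda_{p,\omega}} \leq C\|h\|_{\Lambda_{p,\omega}}$ with $C$ depending only on $p$ and $\omega$, not on $t$. (One should note that $\Lambda_{p,\omega}$ is the ``$\varepsilon = 0$'' endpoint inside $\Lambda_{p),\omega}$, so if the cited maximal inequality is stated only for the grand space one may instead apply the classical Lorentz-space maximal bound from \cite{Carro} or \cite{Carro}-type results; either way the bound is uniform in $t$.)

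\textbf{Assembling the estimate.} With the uniform bound $\|\varphi_t * h\|_{\Lambda_{p,\omega}} \leq C\|h\|_{\Lambda_{p,\omega}}$ in hand, the first and third terms above are each at most $\max\{C,1\}\,\eta$, and the middle term is $< \eta$ for $t$ large (using $\varphi_t * g \to g$ uniformly on $(0,1)$ together with $\int_0^1 \omega < \infty$). Hence $\limsup_{t\to\infty}\|\varphi_t * f - f\|_{\Lambda_{p,\omega}} \leq (2\max\{C,1\}+1)\eta$, and since $\eta$ was arbitrary the limit is $0$. The main obstacle is the uniform-in-$t$ operator bound: one must be careful that the maximal operator bound being invoked is genuinely available on $\Lambda_{p,\omega}$ (not just on the grand space $\Lambda_{p),\omega}$) and that its constant does not degenerate; everything else — density of smooth functions, the rearrangement inequality $(\varphi_t * h)^* \leq (Mh)^*$, and passing uniform convergence through the norm — is routine given the integrability of $\omega$ and the finiteness of $X$.
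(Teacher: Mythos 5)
Your proposal is correct and follows essentially the same route as the paper: density of compactly supported (smooth) functions, the three-term triangle-inequality split, and the uniform-in-$t$ bound $\|\varphi_t\ast h\|_{\Lambda_{p,\omega}}\leq C\|Mh\|_{\Lambda_{p,\omega}}\leq C'\|h\|_{\Lambda_{p,\omega}}$ obtained from Theorem~\ref{supremummaximal} together with the boundedness of the Hardy--Littlewood maximal operator; in fact you are more explicit than the paper about why the term $\|\varphi_t\ast(f-g)\|_{\Lambda_{p,\omega}}$ is controlled. The only slip is writing the limit as $t\to\infty$ in places: with the normalization $\varphi_t(x)=t^{-d}\varphi(x/t)$ the approximate identity concentrates as $t\to 0^{+}$, which is the limit the paper's proof actually uses (its own definition contains the same typo).
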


\begin{proof}
Let $f\in \Lambda _{p,\omega }$ and $\varepsilon >0$ be given. By the
Theorem \ref{supremummaximal} and the boundedness of maximal function in $%
\Lambda _{p),\omega }$, we have%
\begin{equation*}
\left\Vert \varphi _{t}\ast f\right\Vert _{\Lambda _{p,\omega }}\leq C^{\ast
}\left\Vert Mf\right\Vert _{\Lambda _{p,\omega }}\leq C\left\Vert
f\right\Vert _{\Lambda _{p,\omega }}<\infty
\end{equation*}%
and then $\varphi _{t}\ast f\in \Lambda _{p,\omega }$ for all $t>0.$ If we
use the similar method in \cite{Yap}, then it is easy to see that the space $%
C_{c}\left( X\right) $ is dense in $\Lambda _{p),\omega }$. Therefore, there
exists a function $g\in C_{c}\left( X\right) $ such that%
\begin{equation}
\left\Vert f-g\right\Vert _{\Lambda _{p,\omega }}<\varepsilon  \label{1}
\end{equation}%
as $t>0.$ Moreover, for all $t>0,$ we have $\varphi _{t}\ast g\in
C_{0}^{\infty }\left( X\right) ,$ see \cite[Theorem 2.29]{Ad}. It is easily
seen that $\varphi _{t}\ast g\longrightarrow g$ uniformly on compact sets as 
$t\longrightarrow 0^{+}$. Hence we have%
\begin{equation}
\left\Vert \varphi _{t}\ast g-g\right\Vert _{\Lambda _{p,\omega
}}<\varepsilon .  \label{2}
\end{equation}

Finally by using (\ref{1}) and (\ref{2})$,$%
\begin{eqnarray*}
\left\Vert f-\varphi _{t}\ast f\right\Vert _{\Lambda _{p,\omega }} &\leq
&\left\Vert f-g\right\Vert _{\Lambda _{p,\omega }}+\left\Vert g-\varphi
_{t}\ast g\right\Vert _{\Lambda _{p,\omega }}+\left\Vert \varphi _{t}\ast
g-\varphi _{t}\ast f\right\Vert _{\Lambda _{p,\omega }} \\
&<&\varepsilon .
\end{eqnarray*}%
This completes the proof.
\end{proof}

Now, we are ready to present the main theorem of this section for the space $%
\Lambda _{p,\omega }.$

\begin{theorem}
Let $\left\{ \varphi _{t}\right\} $ be a potential-type approximate
identity. Then If $p<\infty $, then we have $\left\Vert \varphi _{t}\ast
f-f\right\Vert _{\Lambda _{p),\omega }}\longrightarrow 0$ as $%
t\longrightarrow 0^{+}$ for $f\in \Lambda _{p),\omega }.$ Moreover, we get%
\begin{equation*}
\left\Vert \varphi _{t}\ast f\right\Vert _{\Lambda _{p),\omega }}\leq
C\left( A,p\right) \left\Vert Mf\right\Vert _{\Lambda _{p),\omega }}\leq
C\left( A,p\right) \left\Vert f\right\Vert _{\Lambda _{p),\omega }}.
\end{equation*}
\end{theorem}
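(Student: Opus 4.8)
The plan is to mimic the density argument already used for Theorem~\ref{Lorentzgirisim}, but now in the grand space $\Lambda _{p),\omega }$, using the boundedness of the maximal operator in $\Lambda _{p),\omega }$ (established in \cite{Jain}) together with the pointwise control $\sup_{t>0}\left\vert \varphi _{t}\ast f\right\vert \leq Mf$ from Theorem~\ref{supremummaximal}. First I would record the ``moreover'' part, since it is immediate: for $f\in \Lambda _{p),\omega }$ and any $t>0$, monotonicity of the rearrangement gives $\left( \varphi _{t}\ast f\right) ^{\ast }\leq \left( Mf\right) ^{\ast }$, hence
\begin{equation*}
\left\Vert \varphi _{t}\ast f\right\Vert _{\Lambda _{p),\omega }}\leq \left\Vert Mf\right\Vert _{\Lambda _{p),\omega }}\leq C\left( A,p\right) \left\Vert f\right\Vert _{\Lambda _{p),\omega }},
\end{equation*}
where $C(A,p)$ is the norm of $M$ on $\Lambda _{p),\omega }$. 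In particular $\varphi _{t}\ast f\in \Lambda _{p),\omega }$ for every $t>0$, and the map $f\mapsto \varphi _{t}\ast f$ is uniformly bounded on $\Lambda _{p),\omega }$ in $t$.

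Next I would establish the convergence for a dense class. As noted in the proof of Theorem~\ref{Lorentzgirisim}, $C_{c}(X)$ is dense in $\Lambda _{p),\omega }$ (via the argument of \cite{Yap}), so given $f\in \Lambda _{p),\omega }$ and $\eta >0$ pick $g\in C_{c}(X)$ with $\left\Vert f-g\right\Vert _{\Lambda _{p),\omega }}<\eta $. For such $g$ one has $\varphi _{t}\ast g\to g$ uniformly on a fixed compact set containing the supports of all $\varphi _{t}\ast g$ for small $t$ (this uses that $\widetilde{\varphi }$ is integrable, so the mass of $\varphi _{t}$ escaping any neighbourhood of $0$ tends to $0$; since $X=(0,1)$ has finite measure we may also simply invoke the potential-type hypothesis). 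Uniform convergence on a set of finite measure forces $\left\Vert \varphi _{t}\ast g-g\right\Vert _{\Lambda _{p),\omega }}\to 0$: indeed $\left( \varphi _{t}\ast g-g\right) ^{\ast }(s)\leq \left\Vert \varphi _{t}\ast g-g\right\Vert _{\infty }$, so
\begin{equation*}
\left\Vert \varphi _{t}\ast g-g\right\Vert _{\Lambda _{p),\omega }}\leq \left\Vert \varphi _{t}\ast g-g\right\Vert _{\infty }\,\underset{0<\varepsilon <p-1}{\sup }\left( \varepsilon \,W(1)\right) ^{\frac{1}{p-\varepsilon }}\longrightarrow 0,
\end{equation*}
where finiteness of the supremum uses $W(1)<\infty $ (recall $\omega $ is a decreasing integrable weight) and $\varepsilon ^{1/(p-\varepsilon )}\leq (p-1)^{1/(p-\varepsilon )}\leq \max\{1,p-1\}$ on $(0,p-1)$; here $p<\infty $ is used so that the exponents $p-\varepsilon $ stay bounded away from $0$ and $\infty $.

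Finally I would assemble the three-term estimate exactly as in Theorem~\ref{Lorentzgirisim}:
\begin{equation*}
\left\Vert \varphi _{t}\ast f-f\right\Vert _{\Lambda _{p),\omega }}\leq \left\Vert \varphi _{t}\ast (f-g)\right\Vert _{\Lambda _{p),\omega }}+\left\Vert \varphi _{t}\ast g-g\right\Vert _{\Lambda _{p),\omega }}+\left\Vert g-f\right\Vert _{\Lambda _{p),\omega }},
\end{equation*}
bounding the first term by $C(A,p)\left\Vert f-g\right\Vert _{\Lambda _{p),\omega }}<C(A,p)\eta $ via the uniform bound from the first paragraph, the third by $\eta $, and the second by $<\eta $ for all sufficiently small $t$. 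Letting $\eta \to 0$ gives $\left\Vert \varphi _{t}\ast f-f\right\Vert _{\Lambda _{p),\omega }}\to 0$ as $t\to 0^{+}$. The one delicate point — the ``hard part'' — is the passage from uniform convergence of $\varphi _{t}\ast g$ to convergence in the grand Lorentz norm: one must check that the norm of a uniformly small, compactly supported function is controlled by its sup-norm times a constant independent of $t$, which is where the hypotheses $p<\infty $ and $W(1)<\infty $ (decreasing integrable $\omega $) genuinely enter; the convolution-boundedness input is otherwise a black box from \cite{Jain}.
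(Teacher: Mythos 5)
Your route differs from the paper's: you run the density/three-term argument directly in the grand norm, using the uniform operator bound $\left\Vert \varphi _{t}\ast f\right\Vert _{\Lambda _{p),\omega }}\leq C\left\Vert Mf\right\Vert _{\Lambda _{p),\omega }}$ coming from Theorem \ref{supremummaximal} and the maximal-operator boundedness of \cite{Jain}, whereas the paper instead invokes Theorem \ref{Lorentzgirisim} at each level $p-\varepsilon $ and then takes the supremum over $\varepsilon $. The step you single out as the ``hard part'' is in fact unproblematic: the estimate $\left\Vert h\right\Vert _{\Lambda _{p),\omega }}\leq \left\Vert h\right\Vert _{\infty }\sup_{0<\varepsilon <p-1}\left( \varepsilon W\left( 1\right) \right) ^{1/\left( p-\varepsilon \right) }$ is correct, and the supremum is at most $\max \left\{ 1,\left( p-1\right) W\left( 1\right) \right\} <\infty $, so uniform convergence of $\varphi _{t}\ast g$ to $g$ on the finite measure space $X=\left( 0,1\right) $ does give convergence in $\Lambda _{p),\omega }$.

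The genuine gap is the step you import as a black box: the density of $C_{c}\left( X\right) $ in $\Lambda _{p),\omega }$. This cannot be established, because it is false in general. Taking $\omega \equiv 1$ on $\left( 0,1\right) $ (a decreasing integrable weight), equimeasurability gives $\left\Vert f\right\Vert _{\Lambda _{p),1}}=\left\Vert f\right\Vert _{p)}$, so $\Lambda _{p),\omega }$ reduces to the grand Lebesgue space $L^{p)}$, whose norm is not absolutely continuous; the closure of $C_{0}^{\infty }$ there is a proper closed subspace, and by \cite[Theorem 4]{Gur} the space $L^{p)}$ admits no bounded approximate identity, so $\left\Vert \varphi _{t}\ast f-f\right\Vert _{p)}\not\rightarrow 0$ for some $f$. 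Consequently the theorem as stated cannot hold for every $f\in \Lambda _{p),\omega }$, and any proof must fail somewhere: yours fails precisely at the density claim (which the paper also asserts, without justification, in the proof of Theorem \ref{Lorentzgirisim}), while the paper's own proof fails at the analogous point that the threshold $h=h\left( \eta ,\varepsilon \right) $ furnished by Theorem \ref{Lorentzgirisim} for $\Lambda _{p-\varepsilon ,\omega }$ depends on $\varepsilon $, so the supremum over $\varepsilon $ cannot be taken at a fixed $t$. Your argument does go through verbatim, and is cleaner than the paper's, if one restricts $f$ to the closure of $C_{c}\left( X\right) $ (equivalently, of $\Lambda _{p,\omega }$) in $\Lambda _{p),\omega }$; that is the correct formulation of the result.
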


\begin{proof}
Let $f\in \Lambda _{p),\omega }$ be given. By Theorem \ref{Lorentzgirisim},
for every $\eta >0$ there exists an $h>0$ such that%
\begin{equation*}
\left\Vert \varphi _{t}\ast f-f\right\Vert _{\Lambda _{p-\varepsilon ,\omega
}}<\eta
\end{equation*}%
for all $t$ satisfying $t<h$. This follows that 
\begin{equation*}
\left\Vert \varphi _{t}\ast f-f\right\Vert _{\Lambda _{p),\omega }}<\eta
\sup_{0<\varepsilon <q-1}\varepsilon ^{\frac{1}{q-\varepsilon }}=\left(
q-1\right) \eta .
\end{equation*}%
That is the desired result.
\end{proof}

\section{Acknowledgment}

We express our thanks to Professor Amiran Gogatishvili for kind comments and
helpful suggestions.

\bigskip

\end{document}